\documentclass[12pt, leqno]{amsart}
\pagestyle{plain}

\usepackage[mathcal]{euscript}
\usepackage{amssymb, amsfonts, xy}
\usepackage{graphicx}

\usepackage{setspace}

\xyoption{all} \CompileMatrices

\begin{document}

\def\map{\mathop{\rm map}\nolimits}
\def\ker{\mathop{\rm ker}\nolimits}
\def\im{\mathop{\rm im}}
\def\precdot{\mathop{\prec\!\!\!\cdot}\nolimits}
\def\colim{\mathop{\rm colim}}
\def\lim{\mathop{\rm lim}}
\def\ohom{\mathop{\rm \overline{Hom}}\nolimits}
\def\Hom{\mathop{\rm Hom}\nolimits}
\def\Ext{\mathop{\rm Ext}\nolimits}
\def\Rep{\mathop{\rm Rep}\nolimits}
\def\Aut{\mathop{\rm Aut}}
\def\id{\mathop{\rm id}\nolimits}
\def\Ast{\mathop{*}}
\def\vert{\mathop{\textsf{vert}}\nolimits}
\def\edge{\mathop{\textsf{edge}}\nolimits}
\def\aa{\mathbb{A}}
\def\g{\mathop{\mathcal Graphs}\nolimits}
\def\a{\mathop{\mathcal Ab}\nolimits}

\renewcommand{\theenumi}{\alph{enumi}}

\def\presup#1{{}^{#1}\hspace{-.11em}}      
\def\presub#1{{}_{#1}\hspace{-.08em}}      

\def\intertitle#1{

\medskip

{\em \noindent #1}

\smallskip
}

\newtheorem{thm}{Theorem}[section]
\newtheorem{theorem}[thm]{Theorem}
\newtheorem{lemma}[thm]{Lemma}
\newtheorem{corollary}[thm]{Corollary}
\newtheorem{proposition}[thm]{Proposition}
\newtheorem{example}[thm]{Example}

\theoremstyle{remark}
\newtheorem{rem}[thm]{Remark}

\theoremstyle{definition}
\newtheorem{definition}[thm]{Definition}

\makeatletter
\let\c@equation\c@thm
\makeatother
\numberwithin{equation}{section}



\newcommand{\comment}[1]{}

\newif\ifshortlabels\shortlabelstrue

\ifshortlabels
  \newcommand{\mylabel}[1]{\label{#1}}
  \newcommand{\mylabeln}[1]{\label{#1}}
\else
  \newcommand{\mylabel}[1]{(#1) \label{#1}\ \ \ \ \ \ \ \ \ \ \ \ \ \ \\}
  \newcommand{\mylabeln}[1]{(#1) \label{#1}}
\fi



\title{An almost full embedding \\ of the category of graphs \\
  into the category of abelian groups}
\author{Adam J. Prze\'zdziecki$^1$}
\address{Department of Mathematics, Warsaw University of Life Sciences - SGGW, Warsaw, Poland}
\email{adamp@mimuw.edu.pl}

\maketitle
\begin{center}
\today
\end{center}

\comment{
\begin{center}
\vspace{-0.7cm}
{\small \it Institute of Mathematics, Warsaw University, Warsaw, Poland \\
Email address: \verb+adamp@mimuw.edu.pl+ } \\
\end{center}
}

\footnotetext[1]{The author was partially supported by grant
  N N201 387034 of the Polish Ministry of Science and Higher Education.}

\begin{abstract}
  We construct an embedding $G:\g\to{\mathcal A}b$
  of the category of graphs into the category of abelian groups such that for $X$ and $Y$ in $\g$ we have
  $$\Hom(GX,GY)\cong\mathbb{Z}[\Hom_{\g}(X,Y)],$$
  the free abelian group whose basis is the set $\Hom_{\g}(X,Y)$. The isomorphism is functorial in $X$ and $Y$. The existence of such an embedding implies that, contrary to a common belief, the category of abelian groups is as complex and comprehensive as any other concrete category. We use this embedding to settle an old problem of Isbell whether every full subcategory of the category of abelian groups, which is closed under limits, is reflective. A positive answer turns out to be equivalent to weak Vop\v enka's principle, a large cardinal axiom which is not provable but believed to be consistent with standard set theory. We obtain some consequences to the Hovey-Palmieri-Stricland problem about existence of arbitrary localizations in a stable homotopy category.

  Several known constructions in the category of abelian groups are obtained as quick applications of the embedding.

\vspace{4pt}
  {\noindent\leavevmode\hbox {\it MSC:\ }}
  {18B15 (20K20,20K30,18A40,03E55)} \\
  \comment{
  20K20 Torsion-free groups, infinite rank \\
  20K30 Automorphisms, homomorphisms, endomorphisms, etc. \\
  03E55 Large cardinals \\
  18A40 Adjoint functors (universal constructions, reflective subcategories, Kan extensions, etc. \\
  18B15 Embedding theorems, universal categories. \\
  }
  {\em Keywords:} Category of abelian groups; Localization; Reflector; Large cardinals; Embeddings of categories; Orthogonal subcategory problem
  \vspace{-9pt}

\end{abstract}


\section{Introduction}
\mylabel{section-introduction}

A classical theorem of Corner \cite[Theorem A]{corner} states that for every countable ring $A$ there exists an abelian group $\mathbb{A}$ such that $A\cong\Hom(\mathbb{A},\mathbb{A})$. Actually, the proof of this theorem implies that analogous groups $\mathbb{A}$ exist also for rings $A$ of cardinality continuum whose additive group is free. We start with such a group which corresponds to a ring whose additive group is generated by morphisms between at most countable graphs. By means of colimits we construct out of this group a functor
$$G:\g\to\a$$
which embeds the category of graphs into the category of abelian groups in such a way that for $X$ and $Y$ in $\g$ we have a natural equivalence
$$\gamma:\mathbb{Z}[\Hom_{\g}(X,Y)]\stackrel{\cong}{\longrightarrow}\Hom(GX,GY)$$
where $\mathbb{Z}[S]$ denotes the free abelian group with basis $S$. The $\mathbb{Z}[S]$ notation is chosen so as to resemble the group ring, which we have when $\Hom_{\g}(X,X)$ is a group.

The target category $\a$ is often regarded as being ``easy'', a category in which analogues of hard problems, formulated in other categories, should have simple solutions. In fact, the problem which motivates this paper was summed up with a little surprise as being ``an unsolved problem even in the category of abelian groups'' \cite[page 620]{isbell-categories}. On the contrary, the category of graphs is actively researched since it is nearly universal for concretizable categories. A category is concretizable if it admits an embedding, not necessarily full, into the category of sets. Concrete categories are often called ``point-set'' categories. Corollary 4.5 in \cite[Chapter III]{trnkova-book} implies that if we assume that measurable cardinals are bounded above, then every concretizable category fully embeds in $\g$. The assumption about an upper bound for measurable cardinals is consistent with standard ZFC set theory. Actually, most of the concrete categories encountered in the literature are accessible ---  such categories embed into $\g$ without any nonstandard set-theoretic axioms \cite[Theorem 2.65]{adamek-rosicky}.

We see that the existence of the functor $G$ implies that the category of abelian groups is as complex and comprehensive as any other concrete category. Any construction possible in the category of graphs, or another concrete category, whose properties are not destructed when addition of morphisms is introduced, translates into abelian groups. Several quick examples of such reasonings are listed in Section \ref{section-applications}. These are mostly alternative proofs of known theorems.

Our main motivation for this paper was to answer an old problem of Isbell \cite[pages 619-620]{isbell-categories} whether every embedding of a full subcategory, closed under taking limits, into the category of abelian groups has a left adjoint. The answer turns out to be equivalent to weak Vop\v enka's principle. A negative answer is consistent with ZFC while a positive one is believed to be consistent with but not provable in ZFC. Section \ref{section-orthogonal} is concerned with this problem, known as the orthogonal subcategory problem.

In Section \ref{section-stable} we derive some implications of our construction to the Hovey-Palmieri-Stricland question \cite{hovey-axiomatic} about existence of arbitrary localizations in a stable homotopy category.

This paper adds to an already significant literature on embeddings of categories. Let us recall here a choice of results. A full embedding of $\g$ into the category of semigroups was constructed in \cite{semigroups} and into integral domains in \cite{integral}. Almost full embeddings, up to constant maps, into the category of metric spaces were obtained in \cite{trnkova-metric} and into paracompact spaces in \cite{paracompact}. Almost full embeddings: up to nullhomotopic maps into the the homotopy category and up to trivial homomorphisms and conjugation into the category of groups were constructed in \cite{przezdziecki-groups}. For other related results see \cite{trnkova-book}, \cite{adamek-rosicky}, \cite{trnkova-30}.

The embedding $G$ is constructed in Section \ref{section-embedding}. It has nice categorical properties, but the fact that it takes values in groups of cardinality at least continuum makes it not very flexible, especially when we need an inductive argument or the least example possible. To mitigate this problem we introduce, in Section \ref{section-approximation}, another embedding $G_{fin}$ which approximates $G$. We have a natural transformation $G_{fin}\to G$ consisting of monomorphisms. For infinite graphs $X$ we have $|X|=|G_{fin}X|$. The price we pay is that the natural transformation
$$\gamma:\mathbb{Z}[\Hom_{\g}(X,Y)]\longrightarrow\Hom(G_{fin}X,G_{fin}Y)$$
is one-to-one but not onto in general. However it does happen that the homomorphism $\gamma$ is an isomorphism, for example when $X$ or $\Hom(X,Y)$ is finite.

Unless explicitly stated, every group in this paper is torsion free abelian. Countable means at most countable. We write $\Hom(X,Y)$ rather than $\Hom_{\mathcal{C}}(X,Y)$ when there is no need to emphasize the category to which the morphisms belong. A free group on a basis $S$ is denoted $\mathbb{Z}[S]$.

\section{Corner's method}

In this section we formulate a theorem implicitly proved by Corner
\cite[Theorem A]{corner} and crucial for the following section.

Let $\widehat{A}=\lim_nA/nA$ denote the {\em natural completion} of the group $A$. The limit is taken over positive integers $n$ ordered by divisibility. A classical reference for natural completion, called also a {\em $Z$-adic completion}, is \cite[Theorem 39.5]{fuchs}. We have a group isomorphism $\widehat{A}\cong\prod A^{\wedge}_p$. If $A$ is the additive group of a ring, and $A$ is {\em reduced} (i.e. $\Hom(\mathbb{Q},A)=0)$, then $A\subseteq\widehat{A}$ and $\widehat{A}$ admits a unique ring structure which extends $A$. A subgroup $C$ of $B$ is called {\em pure} if for every integer $n$ and $b\in B$ the condition $nb\in C$ implies $b\in C$. Corner's theorem rests on the following beautiful observation (see \cite[Lemma 1.2]{corner}).

\begin{lemma} \mylabel{lemma-pure-subgroup}
  If $C$ is a pure subgroup of $\widehat{A}$ which contains $A$, then $\widehat{C}=\widehat{A}$. In particular, the inclusion $A\subseteq C$ induces an isomorphism $\Hom(A,\widehat{A})\cong\Hom(C,\widehat{A})$.
\end{lemma}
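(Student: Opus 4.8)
\emph{Proof sketch.}
The plan is to reduce the whole statement to the single standard computation
$$\widehat{B}/n\widehat{B}\;\cong\;B/nB,$$
valid for every abelian group $B$ and every positive integer $n$, the two sides being identified through the canonical map $B\to\widehat{B}$. (Writing $n$ as a product of prime powers, this is just the fact that the tower $\{B/p^{k}B\}_{k}$ has surjective transition maps, so the relevant $\lim^{1}$ vanishes and $B^{\wedge}_{p}/p^{k}B^{\wedge}_{p}\cong B/p^{k}B$.) Everything else is a diagram chase once purity of $C$ in $\widehat{A}$ is rephrased as the equalities $C\cap n\widehat{A}=nC$ for all $n$.

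First I would compare the finite quotients. Applying $-/n$ to the inclusions $A\subseteq C\subseteq\widehat{A}$ gives homomorphisms $A/nA\to C/nC\to\widehat{A}/n\widehat{A}$ whose composite is the canonical map $A/nA\to\widehat{A}/n\widehat{A}$, an isomorphism by the display. Purity gives $C\cap n\widehat{A}=nC$, so $C/nC\to\widehat{A}/n\widehat{A}$ is injective; being onto as well (the composite is), it is an isomorphism, hence so is $A/nA\to C/nC$. Thus the inclusion $A\subseteq C$ induces isomorphisms $A/nA\stackrel{\cong}{\longrightarrow}C/nC$ for all $n$, compatible with the transition maps because they are obtained by applying the functors $-/n$ to one fixed inclusion.

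I would then pass to the inverse limit over $n$: these isomorphisms assemble into an isomorphism $\widehat{A}=\lim_{n}A/nA\stackrel{\cong}{\longrightarrow}\lim_{n}C/nC=\widehat{C}$ induced by $A\subseteq C$, and since $\widehat{A}$ is complete (the canonical map $\widehat{A}\to\widehat{\widehat{A}}$ is an isomorphism, a formal consequence of the display) the inclusion $C\subseteq\widehat{A}$ induces $\widehat{C}\to\widehat{\widehat{A}}=\widehat{A}$ inverse to it; this is the asserted equality $\widehat{C}=\widehat{A}$. For the ``in particular'' part: every homomorphism $A\to A/nA$ kills $nA$, so $\Hom(A,A/nA)=\Hom(A/nA,A/nA)$, and likewise for $C$, so transporting along the isomorphisms above (naturally in $n$) gives $\Hom(A,A/nA)\cong\Hom(C,C/nC)$; taking the limit over $n$ and using that $\Hom(B,-)$ commutes with $\lim_{n}B/nB=\widehat{B}$ yields
$$\Hom(A,\widehat{A})\;\cong\;\lim_{n}\Hom(A,A/nA)\;\cong\;\lim_{n}\Hom(C,C/nC)\;\cong\;\Hom(C,\widehat{C})\;=\;\Hom(C,\widehat{A}),$$
and unwinding the identifications shows this composite is exactly restriction along $A\subseteq C$.

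The one genuinely non-formal ingredient is the quotient identity $\widehat{B}/n\widehat{B}\cong B/nB$ (equivalently the Mittag--Leffler vanishing of $\lim^{1}$); with that in hand the rest is bookkeeping. The two things that still want care are that the isomorphisms $A/nA\cong C/nC$ be natural in $n$, so that they survive passage to inverse limits, and that the final $\Hom$ isomorphism be literally the restriction map rather than merely an abstract bijection --- it is precisely this naturality that is used in the applications that follow.
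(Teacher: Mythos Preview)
The paper does not actually give its own proof of this lemma; it merely attributes the result to Corner \cite[Lemma~1.2]{corner}. So there is no in-paper argument to compare against, and your sketch supplies what the paper omits.

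Your argument is correct. The reduction to the finite quotients via purity ($C\cap n\widehat{A}=nC$) and the identity $\widehat{B}/n\widehat{B}\cong B/nB$ is exactly the standard route, and is essentially how Corner proceeds. Two small remarks. First, once you have $\widehat{C}=\widehat{A}$, the ``in particular'' part can be phrased a bit more cleanly: since $\widehat{A}$ is complete, every homomorphism from $A$ (respectively $C$) into $\widehat{A}$ extends uniquely to $\widehat{A}=\widehat{C}$, so restriction gives
\[
\Hom(C,\widehat{A})\;\cong\;\Hom(\widehat{C},\widehat{A})\;=\;\Hom(\widehat{A},\widehat{A})\;\cong\;\Hom(A,\widehat{A}),
\]
which is visibly the restriction map. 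This is the same computation you do, just organized to make the naturality claim at the end automatic rather than something to ``unwind''. Second, the identity $\widehat{B}/n\widehat{B}\cong B/nB$ that you flag as the one non-formal ingredient is indeed the crux; your Mittag--Leffler justification (surjectivity of the tower maps $B/p^{k+1}B\to B/p^{k}B$) is the right reason, and in the paper's setting---where $A$ has free additive group---it is entirely unproblematic.
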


Lemma 1.5 in \cite{corner} implies the following.

\begin{lemma}
  The natural completion of the integers, the ring $\widehat{\mathbb{Z}}$, has a pure subring $\bf P$ without zero divisors such that $|{\bf P}|=2^{\aleph_0}$.
\end{lemma}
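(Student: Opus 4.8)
The plan is to realize the required subring as a polynomial ring over $\mathbb{Z}$ on continuum many generators, sitting inside $\widehat{\mathbb{Z}}=\prod_{p}\mathbb{Z}_{p}$ (the product over all primes $p$ of the $p$-adic integers) in a coordinatewise ``transcendental'' way, and then close it up under purity. Recall that $\widehat{\mathbb{Z}}$ is reduced, torsion free, of cardinality $2^{\aleph_{0}}$, and that each factor $\mathbb{Z}_{p}$ is an integral domain. The obstruction is that $\widehat{\mathbb{Z}}$ itself is riddled with zero divisors arising from the product decomposition: a nonzero $x=(x_{p})_{p}$ and a nonzero $y=(y_{p})_{p}$ multiply to zero precisely when the primes at which $x_{p}=0$ and those at which $y_{p}=0$ together exhaust all primes. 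So the whole difficulty is to carve out a subring which is simultaneously large (cardinality continuum) and ``coordinatewise dense'', so that no such splitting can occur.

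First I would fix an index set $I$ with $|I|=2^{\aleph_{0}}$ and, for each prime $p$, a family $\{b^{(p)}_{i}:i\in I\}\subseteq\mathbb{Z}_{p}$ that is algebraically independent over $\mathbb{Q}$. Such a family exists because a maximal algebraically independent subset of $\mathbb{Z}_{p}$ is a transcendence basis of $\mathbb{Q}_{p}$ over $\mathbb{Q}$, and that transcendence degree is $2^{\aleph_{0}}$ since $\overline{\mathbb{Q}}\cap\mathbb{Q}_{p}$ is countable while $|\mathbb{Q}_{p}|=2^{\aleph_{0}}$. Put $\xi_{i}=(b^{(p)}_{i})_{p}\in\widehat{\mathbb{Z}}$ for $i\in I$, and let $R=\mathbb{Z}[\xi_{i}:i\in I]\subseteq\widehat{\mathbb{Z}}$. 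A polynomial relation $f(\xi_{i_{1}},\dots,\xi_{i_{k}})=0$ would force $f(b^{(p)}_{i_{1}},\dots,b^{(p)}_{i_{k}})=0$ in $\mathbb{Z}_{p}$ for every $p$, hence $f=0$; therefore $R\cong\mathbb{Z}[X_{i}:i\in I]$ is an integral domain, it contains the diagonal copy of $\mathbb{Z}$, and $|R|=2^{\aleph_{0}}$.

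Finally I would take $\mathbf{P}=\{x\in\widehat{\mathbb{Z}}:nx\in R\text{ for some integer }n\ge 1\}$, the purification of $R$ in $\widehat{\mathbb{Z}}$. It is pure by construction. It is a subring: $\widehat{\mathbb{Z}}$ is commutative of characteristic zero, so if $nx,\,my\in R$ with $n,m\ge 1$ then $(nm)(x+y)\in R$ and $(nm)(xy)=(nx)(my)\in R$, and $1\in R$. It contains $R\supseteq\mathbb{Z}$, and $2^{\aleph_{0}}=|R|\le|\mathbf{P}|\le|\widehat{\mathbb{Z}}|=2^{\aleph_{0}}$, so $|\mathbf{P}|=2^{\aleph_{0}}$. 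And it has no zero divisors: if $x,y\in\mathbf{P}$ with $xy=0$, pick $n,m\ge1$ with $nx,my\in R$; then $(nx)(my)=(nm)xy=0$ in the domain $R$, so $nx=0$ or $my=0$, whence $x=0$ or $y=0$ by torsion freeness of $\widehat{\mathbb{Z}}$.

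I expect the only genuinely non-formal step to be the choice in the second paragraph: producing generators that are algebraically independent in every $p$-adic coordinate at once, which hinges on $\mathbb{Q}_{p}$ having transcendence degree $2^{\aleph_{0}}$ over $\mathbb{Q}$. Everything afterwards — that a polynomial ring is a domain, that the purification of a subring of a torsion free reduced ring is again a domain and a subring, and the cardinality squeeze — is routine. One may also observe that, since $\mathbf{P}$ is pure in $\widehat{\mathbb{Z}}$ and contains $\mathbb{Z}$, Lemma \ref{lemma-pure-subgroup} gives $\widehat{\mathbf{P}}=\widehat{\mathbb{Z}}$, which is exactly what makes $\mathbf{P}$ a usable replacement for $\widehat{\mathbb{Z}}$ in Corner's construction.
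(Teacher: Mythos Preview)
Your argument is correct. The paper itself does not prove this lemma; it simply records that it follows from Lemma~1.5 of Corner's paper~\cite{corner}. Your construction---choosing, for each prime $p$, a continuum-sized algebraically independent family in $\mathbb{Z}_p$, assembling these into elements of $\widehat{\mathbb{Z}}$, and then taking the purification of the polynomial subring they generate---is essentially the same mechanism that underlies Corner's lemma: the key input in both cases is that $\mathbb{Q}_p$ has transcendence degree $2^{\aleph_0}$ over $\mathbb{Q}$.

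One minor remark: to conclude that the evaluation map $\mathbb{Z}[X_i]\to\widehat{\mathbb{Z}}$ is injective you only need algebraic independence of the $b^{(p)}_i$ at a \emph{single} prime $p$; the coordinates at the remaining primes may be chosen arbitrarily in $\mathbb{Z}_q$. Your choice of independence at every prime is harmless, just slightly more than required. Your closing observation that Lemma~\ref{lemma-pure-subgroup} gives $\widehat{\mathbf{P}}=\widehat{\mathbb{Z}}$ is exactly the role $\mathbf{P}$ plays in the rest of Corner's construction.
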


The following is essentially proved in \cite[Theorem A]{corner}.

\begin{theorem} \mylabel{theorem-corner}
  If $A$ is a ring of cardinality at most continuum whose additive group is free then there exists a group ${\aa}$ such that:
  \begin{enumerate}
    \item \label{t-corner-inclusions} $A\subseteq\aa\subseteq\widehat{A}$ as left $A$-modules.
    \item \label{t-corner-ring} $A\cong\Hom({\aa},{\aa})$.
    \item \label{t-corner-cardinality} $|\mathbb{A}|=|A|$.
  \end{enumerate}
\end{theorem}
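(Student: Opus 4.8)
The plan is to carry out Corner's construction in the case where the additive group is free.

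\emph{Normalisation.} First I would put $A$ into convenient form. If $A$ has no unit, adjoin one: $\mathbb{Z}\oplus A$ is again a ring with free additive group of cardinality $\le 2^{\aleph_0}$, and recovering the original $A$ at the end is routine, so I assume $1\in A$. The element $1$ is non-divisible in the free group $A$ --- otherwise $1=nw$ with $n>1$ would force $\mathbb{Z}[1/n]\subseteq A$, impossible since subgroups of free abelian groups are free --- so $1$ extends to a $\mathbb{Z}$-basis $\{e_\lambda\}_{\lambda\in\Lambda}$ of $A$ with $e_{\lambda_0}=1$, and $|\Lambda|\le|A|\le 2^{\aleph_0}$. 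Next, using the ring $\mathbf{P}$ from the preceding lemma --- a pure subring of $\widehat{\mathbb{Z}}$ which is an integral domain of cardinality $2^{\aleph_0}$ --- I would select a family $\{\pi_\lambda\}_{\lambda\in\Lambda}\subseteq\mathbf{P}$ algebraically independent over $\mathbb{Q}$. This is possible because a commutative domain of characteristic $0$ has cardinality $\max(\aleph_0,\operatorname{trdeg}_{\mathbb{Q}})$, so $\mathbf{P}$ has transcendence degree $2^{\aleph_0}\ge|\Lambda|$ over $\mathbb{Q}$, whence the $\pi_\lambda$ may be chosen inside $\mathbf{P}$ by transfinite recursion.

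\emph{The group $\aa$.} Regard $\widehat{A}$ as a $\widehat{\mathbb{Z}}$-algebra containing $A$ and $\mathbf{P}$. Following Corner, I would let $N\subseteq\widehat{A}$ be the sub-$A$-bimodule generated by $A$ together with the ``twisted'' elements $\pi_\lambda e_\lambda$ ($\lambda\in\Lambda$), and define $\aa$ to be the pure closure of $N$ in $\widehat{A}$, that is $\aa=\{x\in\widehat{A}:nx\in N\text{ for some }n\ge 1\}$. Then \eqref{t-corner-inclusions} holds because $A\subseteq N\subseteq\aa\subseteq\widehat{A}$, and \eqref{t-corner-cardinality} holds because $N$ is obtained from $A$ by adjoining only $|\Lambda|\le|A|$ generators (so $|N|=|A|$) and the pure closure adjoins at most one element for each pair $(z,n)$ with $z\in N$, $n\ge 1$, since $\widehat{A}$ is torsion free (so $|\aa|=|N|=|A|$). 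Moreover $\aa$, being a pure closure, is pure in $\widehat{A}$ and contains $A$, so Lemma \ref{lemma-pure-subgroup} applies with $C=\aa$: one gets $\widehat{\aa}=\widehat{A}$ and an isomorphism $\Hom(A,\widehat{A})\cong\Hom(\aa,\widehat{A})$ induced by $A\subseteq\aa$.

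\emph{The isomorphism $A\cong\Hom(\aa,\aa)$.} One direction is easy: since $N$ is a left $A$-submodule of $\widehat{A}$ and the pure closure of a left $A$-submodule is again one ($nx\in N\Rightarrow n(ax)=a(nx)\in N$), left multiplication $L_a\colon\aa\to\aa$ is defined for each $a\in A$, and $a\mapsto L_a$ is a ring homomorphism $A\to\Hom(\aa,\aa)$, injective because $L_a(1)=a$, whose composite with evaluation on $A\subseteq\aa$ is the multiplication of $A$ --- this supplies the compatibility statement in \eqref{t-corner-ring}. For the reverse direction, let $\phi\in\Hom(\aa,\aa)$. Composing with $\aa\hookrightarrow\widehat{A}$ and using $\Hom(A,\widehat{A})\cong\Hom(\aa,\widehat{A})$, $\phi$ is the unique continuous extension of $\phi|_A$; as $\phi|_A$ is $\mathbb{Z}$-linear it is continuous in the $\widehat{\mathbb{Z}}$-adic topology, so $\phi$ extends to a continuous --- hence $\widehat{\mathbb{Z}}$-linear --- endomorphism $\widehat{\phi}$ of $\widehat{A}=\widehat{\aa}$, determined by the elements $c_\lambda:=\widehat{\phi}(e_\lambda)\in\aa$. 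I would then use that $\widehat{\phi}$ must carry the generators of $N$ back into $\aa$: evaluating $\widehat{\phi}$ on $\pi_\lambda e_\lambda$ and its $A$-translates, expanding everything in the basis $\{e_\lambda\}$, and clearing denominators to land inside $N$ produces a family of polynomial identities in the $\pi_\lambda$ with coefficients in the domain $\mathbf{P}$. Algebraic independence of the $\pi_\lambda$ annihilates all coefficients of the ``spurious'' monomials, and what remains forces $\widehat{\phi}=L_c$ for a single $c\in\widehat{A}$; the same bookkeeping, together with $\widehat{A}\cap\mathbb{Q}A=A$ (valid since $A$ is pure in $\widehat{A}$ because it is free), forces $c\in A$. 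Hence $\phi=L_c$ with $c\in A$, so $a\mapsto L_a$ is an isomorphism and \eqref{t-corner-ring} holds.

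\emph{Main obstacle.} The real content, and the essence of Corner's method, is the design of $N$: the twisted generators must be rich enough that the polynomial-identity argument collapses $\widehat{\phi}$ all the way down to a \emph{left} multiplication by an element of $A$, and not merely to some $\mathbb{Z}$-linear self-map of $A^{+}$ preserving a handful of subgroups --- the latter would leave $\Hom(\aa,\aa)$ strictly larger than $A$. This is what forces one to attach a \emph{separate} transcendental to each basis vector and, when $A$ is non-commutative, to encode enough of the ring structure on both sides; verifying that the chosen $N$ actually achieves this --- and, along the way, that $\aa$ is genuinely pure in $\widehat{A}$ so that Lemma \ref{lemma-pure-subgroup} applies --- is where the real computation lies.
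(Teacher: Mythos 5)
Your overall strategy (complete $A$, twist by transcendentals from $\mathbf{P}$, take a pure closure, extend endomorphisms to $\widehat{A}$ by $\widehat{\mathbb{Z}}$-linearity) is Corner's, and your remarks on (\ref{t-corner-inclusions}), (\ref{t-corner-cardinality}) and on the transcendence degree are fine; note, however, that the paper does not reprove Corner's theorem at all --- it cites \cite[Theorem A]{corner} and only checks that the hypotheses transfer (for $|A|\le 2^{\aleph_0}$ one needs the transcendence degree of $\mathbf{P}$ over the subring $\Pi$ to be at least $|A|$, and $\Pi=\mathbb{Z}$ when the additive group of $A$ is free; plus the harmless switch from right to left action). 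The genuine gap in your reconstruction is at step (\ref{t-corner-ring}), and it comes from your choice of $N$. You take $N$ to be the sub-$A$-\emph{bimodule} generated by $A$ and the elements $\pi_\lambda e_\lambda$. Then the pure closure $\mathbb{A}$ is invariant under right multiplication by every $b\in A$ (right multiplication preserves $N$, and $n x\in N$ implies $n(xb)=(nx)b\in N$), so each $R_b$ is a group endomorphism of $\mathbb{A}$. If (\ref{t-corner-ring}) held with $A$ acting by left multiplications, then $R_b=L_c$ for some $c$; evaluating at $1\in A\subseteq\mathbb{A}$ gives $c=b$, hence $xb=bx$ for all $x$, i.e.\ $b$ is central. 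So your $\mathbb{A}$ can satisfy (\ref{t-corner-ring}) only for commutative $A$, whereas the ring $A=\mathbb{Z}[\Gamma]$ to which the theorem is applied in this paper is highly noncommutative. Corner's construction is one-sided for exactly this reason: $\mathbb{A}$ is the pure subgroup of $\widehat{A}$ generated by $A$ and the left multiples $Ae_a$ (see the discussion after Remark \ref{remark-presentation-gx}), so that right multiplications do not act on it.

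Even after replacing ``bimodule'' by ``left submodule'', your twisted generators are not Corner's and the decisive step is not established. Corner attaches to \emph{each} $a\in A$ a fresh pair $z_a,w_a\in\mathbf{P}$, algebraically independent over $\Pi$, and uses $e_a=z_a\cdot 1+w_a\cdot a$; the point of this shape is that the condition $\widehat{\phi}(Ae_a)\subseteq\mathbb{A}$ couples $\phi(a)$ to $\phi(1)$ through the pair $(z_a,w_a)$, and the independence/purity argument then forces $\phi(a)=\phi(1)a$ with $\phi(1)\in A$. Your generators $\pi_\lambda e_\lambda$ carry a single transcendental and do not link $e_\lambda$ to $1$ at all; the resulting constraints only say that $\pi_\lambda\phi(Ae_\lambda)$ lands back in $\mathbb{A}$, and your assertion that the polynomial identities ``force $\widehat{\phi}=L_c$'' is exactly the point that needs proof and may fail for this $N$. (Your closing comment that one should encode the ring structure ``on both sides'' points in the wrong direction: two-sidedness is what must be avoided.) Finally, the paper later relies on the precise form of Corner's generators and on the fact that the construction for $A_{fin}\subseteq A$ restricts compatibly (Remark \ref{remark-presentation-gx} and Section \ref{section-approximation}), so substituting a homemade variant of $\mathbb{A}$ would require re-checking those points as well.
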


We need two comments on the original proof of \cite[Theorem A]{corner}. Corner's construction requires that the transcendence degree of $\bf P$ over a certain subring $\bf \Pi$ of $\bf P$, associated with the group $A$, is at least $|A|$. If the additive group of $A$ is free then $\bf \Pi$ is the ring of integers $\mathbb{Z}$ and this condition is satisfied. Secondly, a trivial difference: Corner considers the right action of $A$ on $\aa$; here, since functions act on graphs on the left, we prefer the left action.

This theorem is used throughout most of the paper as a ``black box''. One place where we look again into Corner's construction is the comment below Remark \ref{remark-presentation-gx}, which is not used in other arguments. Another place is Section \ref{section-approximation} where we introduce certain pure subring $A_{fin}$ of $A$ -- we need to know that if $A_0\subseteq A$ is a pure subring then the construction of $\aa$ for $A$ restricts to $\aa_0$ for $A_0$, and $\aa_0\subseteq\aa$ is an inclusion of $A_0$-modules.

\section{Embedding of the category of graphs}
\mylabel{section-embedding}

A graph $X$ is a set, denoted with the same letter $X$, with a binary relation $R\subseteq X\times X$.
We construct a functor $G$ from the category of graphs to the category of abelian groups such that for graphs $X$ and $Y$ we have $\Hom_{\a}(GX,GY)\cong\mathbb{Z}[\Hom_{\g}(X,Y)]$, the free group whose basis is $\Hom_{\g}(X,Y)$.

A poset $I$ is called {\em directed (resp. countably directed)} if any finite
subset (resp. any countable subset) of $I$ has an upper bound in $I$. A poset is viewed as a category where $a\leq b$ corresponds to a morphism $a\to b$.
A diagram (i.e. functor) $S:I\to\mathcal{C}$ and its colimit $\colim S$
are called {\em (countably) directed} if $I$ is (countably) directed. A diagram $S$ and its limit $\lim S$ are called {\em (countably) codirected} if the opposite category $I^{op}$ is (countably) directed.

Let $\Gamma$ be a full subcategory of $\g$ whose objects are representatives of the isomorphism classes of countable graphs. Clearly $\Gamma$ has the cardinality of the continuum.
Let $A=\mathbb{Z}[\Gamma]$ be the ring whose additive group is free with the basis consisting of the identity $1$ and maps $\varphi:X\to Y$ in $\Gamma$. If $\psi:X'\to Y'$ is another member of the basis then the product $\varphi\psi$ in $A$ is the composition $\varphi\psi$ if $Y'=X$, that is $\varphi$ and $\psi$ are composable, and otherwise zero. Let ${\aa}$ be the group described in Theorem \ref{theorem-corner}. This construction gives us inclusions: $(\mbox{morphisms of }\Gamma)\subseteq A\subseteq \mathbb{A}\subseteq\hat{A}$.

{\em Convention.} We use the same letters to denote the maps in $\Gamma$, corresponding elements of $A$ and endomorphisms of $\aa$. When it is clear from context and simplifies notation, we write $\sigma$ instead of $G\sigma$ or $G_{fin}\sigma$.

Let $\id_X:X\to X$ be the identity map. If $X$ is in $\Gamma$ then $\id_X$ is an idempotent of $A$ and we have $\aa\cong \id_X\aa\oplus(1-\id_X)\aa$.

\begin{definition} \mylabel{definition-g}
\begin{itemize}
  \item[(a)]
  For $X$ in $\Gamma$ we define $GX=\id_X\aa$. If $\varphi:X\to Y$ is a map in $\Gamma$ then $\id_Y\varphi=\varphi$, hence
  $\varphi \id_X\aa\subseteq \id_Y\aa$, and therefore $\varphi$ induces, via left multiplication, a group homomorphism $GX\to GY$. Thus $G$ is a functor from $\Gamma$ to $\a$.
  \item[(b)]
  If $X$ is an arbitrary graph then we define
  $$
    GX=\colim_{c\in\Gamma\downarrow X}Gc
  $$
  where $\Gamma\downarrow X$ is the category of maps $c:C\to X$ with $C$ in $\Gamma$ and $Gc$ is defined as $GC$.
  We may view this as an extension of (a) since for $X$ in $\Gamma$ the category $\Gamma\downarrow X$ contains a terminal object $\id_X$.
\end{itemize}
\end{definition}

\rem \mylabel{remark-colimit} For an arbitrary graph $X$ we have
  $$
    GX=\colim_{C\in[X]}GC.
  $$
  Where $[X]$ denotes the category of inclusions whose objects are countable subgraphs $C\subseteq X$.
  This is clear sinnce $[X]$ is isomorphic to a cofinal subcategory of $\Gamma\downarrow X$.
  A map $X\to Y$ induces, by taking images, a map $[X]\to[Y]$, which in turn induces a map $Gf$ of colimits.

\rem The only reason we need to include infinite subgraphs in $[X]$ is that we want this poset to be countably directed to apply Lemma \ref{lemma-limits-zs} in the proof of Theorem \ref{theorem-embedding-graphs}. All statements of this section except \ref{lemma-limits-zs} and \ref{theorem-embedding-graphs} hold when we restrict $\Gamma$ and $[X]$ to finite graphs. In the next section we discuss the properties of the functor $G_{fin}$ obtained by means of finite graphs.

We need a closer look at the structure of the groups $GX$ for $X$ in $\Gamma$.

\rem \mylabel{remark-gamma-x}
  Let $\Gamma_X$ denote the set of maps $\sigma:C\to X$ in $\Gamma$. It is a subset of the morphisms of $\Gamma$ hence a subset of the basis of $A$ viewed as a group.
  Left multiplication by $\id_X$ is the identity on $\Gamma_X$ and zero on $1-\id_X$ and on $\Gamma\setminus\Gamma_X$.
  Thus $\id_X$ induces a projection of $A$ onto the (free) subgroup $\langle\Gamma_X\rangle$, generated by $\Gamma_X$.
  Therefore, applying Theorem \ref{theorem-corner}, we have:
  $$
  \langle\Gamma_X\rangle\subseteq GX\subseteq\widehat{\langle\Gamma_X\rangle}
  $$

A morphism $\sigma:C\to X$ in $\Gamma$ may be viewed as an element of the ring $A\subseteq\aa$ which induces a homomorphism $\sigma:GC\to GX$, or as an element of $\Gamma_X\subseteq GX$. In Lemma \ref{lemma-dense} we are going to see that $\id_X\in GX$ is particularly important.

\rem \mylabel{remark-presentation-gx}
  Every element $u\in GX$ may be uniquely written as $u=\sum z_i\sigma_i$ where $\sigma_i\in\Gamma_X$ and $z_i\in\widehat{\mathbb{Z}}$. The membership $u\in\widehat{\langle\Gamma_X\rangle}$ implies that the elements $z_i$ form a set which is finite or countable with $0$ as its unique accumulation point in $\widehat{\mathbb{Z}}$.

Looking at the construction of $\aa$ in \cite[proof of Theorem A, after (3)]{corner}, we see that the set of $z_i$'s above is finite: The group $\aa$ is defined there as the pure subgroup of $\widehat{A}$ generated by $A$ and subsets of the form $Ae_a$ where $a\in A$ and $e_a=z_a\cdot 1+w_a\cdot a$ for certain $z_a$ and $w_a$ in $\widehat{\mathbb{Z}}$. This implies that the number of $z_i\neq 0$ has to be finite.

\begin{lemma}\mylabel{lemma-injective}
  If $\varphi:X\to Y$ is one-to-one then so is $G\varphi:GX\to GY$.
\end{lemma}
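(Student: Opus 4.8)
The plan is to reduce the statement to the case $X,Y\in\Gamma$ by means of the colimit definitions of $\bar{G}$, and in that case to argue directly from the description of $\bar{G}X$ recorded in Remarks \ref{remark-gamma-x} and \ref{remark-presentation-gx}.

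First I would treat $X,Y\in\Gamma$. Here $\bar{G}\varphi$ is left multiplication by $\varphi\in A$, so I would take $u\in\bar{G}X$ and write it as $u=\sum_i z_i\sigma_i$ with $\sigma_i\in\Gamma_X$ pairwise distinct and $z_i\in\widehat{\mathbb{Z}}\setminus\{0\}$ (finitely many), as in Remark \ref{remark-presentation-gx}. Since each $\sigma_i$ has codomain $X$, its product with $\varphi$ in the ring $\widehat{A}$ is the composite $\varphi\circ\sigma_i\in\Gamma_Y$, and continuity of multiplication in $\widehat{A}$ together with convergence of $\sum_i z_i\sigma_i$ gives $\bar{G}\varphi(u)=\varphi u=\sum_i z_i(\varphi\sigma_i)$. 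Because $\varphi$ is injective on underlying sets, $\varphi\sigma_i=\varphi\sigma_j$ forces $\sigma_i=\sigma_j$, so the elements $\varphi\sigma_i$ of $\Gamma_Y$ are again pairwise distinct; hence $\sum_i z_i(\varphi\sigma_i)$ is the canonical expansion of $\bar{G}\varphi(u)$ inside $\widehat{\langle\Gamma_Y\rangle}=\prod_p\langle\Gamma_Y\rangle^{\wedge}_p$, and a finite $\widehat{\mathbb{Z}}$-combination of distinct free generators with some nonzero coefficient is nonzero in that completion, so $u\neq0$ yields $\bar{G}\varphi(u)\neq0$.

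Next I would pass to arbitrary nonempty countable $X,Y$: choosing isomorphisms $c\colon X\xrightarrow{\cong}X_0$ and $d\colon Y\xrightarrow{\cong}Y_0$ with $X_0,Y_0\in\Gamma$, the map $d\varphi c^{-1}$ is injective in $\Gamma$, the functor $\bar{G}$ carries $c$ and $d$ to isomorphisms, and $\bar{G}\varphi=(\bar{G}d)^{-1}\circ\bar{G}(d\varphi c^{-1})\circ\bar{G}c$ is injective by the previous step. Finally, for arbitrary nonempty $X,Y$ I would use that $[X]$ is countably directed, hence filtered, so that $\bar{G}X=\colim_{C\in[X]}\bar{G}C$ and every $u\in\bar{G}X$ equals $\iota_C(u_C)$ for some structure map $\iota_C\colon\bar{G}C\to\bar{G}X$ and some $u_C\in\bar{G}C$. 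By the construction of $\bar{G}\varphi$ in Definition \ref{definition-g}(c), the composite $\bar{G}\varphi\circ\iota_C$ factors as $\bar{G}C\xrightarrow{\bar{G}(\varphi|_C)}\bar{G}\varphi(C)\xrightarrow{\iota'_{\varphi(C)}}\bar{G}Y$, where $\varphi(C)$ carries the subgraph structure induced from $Y$. If $\bar{G}\varphi(u)=0$, then $\iota'_{\varphi(C)}\bigl(\bar{G}(\varphi|_C)(u_C)\bigr)=0$, so by filteredness of $[Y]$ some inclusion $\varphi(C)\hookrightarrow D$ of nonempty countable subgraphs of $Y$ kills $\bar{G}(\varphi|_C)(u_C)$; applying the countable case to the injective maps $\varphi(C)\hookrightarrow D$ and $\varphi|_C\colon C\to\varphi(C)$ shows both $\bar{G}(\varphi(C)\hookrightarrow D)$ and $\bar{G}(\varphi|_C)$ are one-to-one, whence $u_C=0$ and $u=0$.

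I expect the only genuine obstacle to be the computation in the $\Gamma$-case: one must be certain that left multiplication by $\varphi$ relabels the free generators $\Gamma_X$ injectively into $\Gamma_Y$ without merging coefficients or spoiling convergence, which is precisely where injectivity of $\varphi$ and the support structure of Remark \ref{remark-presentation-gx} are used. Everything after that is formal: the countable case is functoriality of $\bar{G}$, and the general case is the exactness of filtered colimits of abelian groups.
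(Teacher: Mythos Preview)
Your proof is correct and follows essentially the same route as the paper: reduce to the $\Gamma$-case via the colimit definition and there use that left multiplication by an injective $\varphi$ relabels the generators $\Gamma_X$ injectively into $\Gamma_Y$, so the induced map on completions is injective. The paper phrases the $\Gamma$-case via the sandwich $\langle\Gamma_X\rangle\subseteq\bar{G}X\subseteq\widehat{\langle\Gamma_X\rangle}$ from Remark~\ref{remark-gamma-x} rather than the explicit presentation of Remark~\ref{remark-presentation-gx}, and for the general case first observes that the transition maps in~(\ref{equation-colimit}) are inclusions (so $\bar{G}C\subseteq\bar{G}X$ directly) instead of chasing a killing object $D$ in the filtered colimit, but these are only cosmetic differences.
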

\begin{proof}
  First we prove the case when $\varphi$ is in $\Gamma$. Remark \ref{remark-gamma-x} implies that
  $\langle\Gamma_X\rangle\subseteq GX\subseteq\widehat{\langle\Gamma_X\rangle}$.
  Injectivity of $\varphi:X\to Y$ implies injectivity of the induced map $\Gamma_X\to\Gamma_Y$, then injectivity of the homomorphism $\widehat{\langle\Gamma_X\rangle}\to\widehat{\langle\Gamma_Y\rangle}$, and then injectivity of its restriction $G\varphi:GX\to GY$.

  Since $[X]$ is a category of inclusions, the previous paragraph implies that (\ref{remark-colimit}) is a directed colimit of inclusions and therefore $GC\subseteq GX$ for every countable $C\subseteq X$. For any $x\neq y$ in $GX$, there exists a countable $C\subseteq X$ such than $x$ and $y$ belong to $GC$. Then $GC$ is mapped isomorphically to $G(\varphi C)$ and, since analogously $G(\varphi C)\subseteq GY$, we obtain $G\varphi(x)\neq G\varphi(y)$.
\end{proof}

Note that $G$ hardly ever preserves epimorphisms. This is so because maps to $\varphi(X)$ in $\Gamma$ usually do not lift to $X$.

\begin{lemma}\mylabel{lemma-left-multiplication}
  Every homomorphism $h:GX\to GY$ with $X$ and $Y$ in $\Gamma$ can be uniquely represented as left multiplication by an $a\in A$. We have $a=\sum_{i\in I}k_i\sigma_i$, where $k_i$ are nonzero integers, $\sigma_i:X\to Y$ are distinct maps in $\Gamma$, and $I$ is finite.
\end{lemma}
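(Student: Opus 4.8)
\emph{Proof strategy.} The plan is to use the splitting $\aa\cong\textup{id}_X(\aa)\oplus(1-\textup{id}_X)(\aa)$ to turn $h$ into an endomorphism of $\aa$, to identify that endomorphism with an element of $A$ by Theorem \ref{theorem-corner}, and then to extract the normal form from the multiplication table of $A$.

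First I would define $\tilde h:\aa\to\aa$ by $\tilde h(v)=h(\textup{id}_X v)$. This makes sense because $\textup{id}_X v\in\textup{id}_X(\aa)=\bar GX$ while $h$ sends this into $\bar GY=\textup{id}_Y(\aa)\subseteq\aa$; plainly $\tilde h$ is a group endomorphism of $\aa$. By Theorem \ref{theorem-corner}(\ref{t-corner-ring}) the ring $\Hom(\aa,\aa)$ is exactly $A$ acting by left multiplication, so $\tilde h$ is left multiplication by a unique $a\in A$. From the formula for $\tilde h$ one reads off $a\,\textup{id}_X=a$ (because $\textup{id}_X$ is idempotent and $\tilde h(\textup{id}_X v)=\tilde h(v)$) and $\textup{id}_Y\,a=a$ (because the image of $\tilde h$ lies in $\textup{id}_Y(\aa)$), using here that the action of $A$ on $\aa$ is faithful. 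Hence $a=\textup{id}_Y\,a\,\textup{id}_X$.

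Next I would expand $a$ in the free $\mathbb{Z}$-basis $\{1\}\cup\{\,\text{maps in }\Gamma\,\}$ of $A$ and apply the operator $x\mapsto\textup{id}_Y\,x\,\textup{id}_X$. In $A$ we have $\textup{id}_Y\cdot 1\cdot\textup{id}_X=\textup{id}_Y\,\textup{id}_X$, which equals $\textup{id}_X$ when $X=Y$ and $0$ otherwise, while for a basis map $\tau:C\to D$ one has $\textup{id}_Y\,\tau\,\textup{id}_X=\tau$ precisely when $C=X$ and $D=Y$, and $0$ in every other case. Therefore $a=\textup{id}_Y\,a\,\textup{id}_X=\sum_{i\in I}k_i\sigma_i$ with $I$ finite, the $k_i$ nonzero integers and the $\sigma_i:X\to Y$ distinct maps in $\Gamma$ (when $X=Y$ the identity is allowed among the $\sigma_i$). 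Since $\textup{id}_X u=u$ for $u\in\bar GX$, we get $h(u)=\tilde h(u)=a u$, so $h$ is left multiplication by this $a$. For uniqueness: $\textup{id}_X\in\bar GX$ by Remark \ref{remark-gamma-x}, and any element $c=\sum k_i\sigma_i$ of $A$ of the stated form, with all $\sigma_i:X\to Y$, satisfies $c\,\textup{id}_X=c$; hence if two such elements $a,b$ induce the same map on $\bar GX$ then $0=(a-b)\,\textup{id}_X=a-b$.

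I do not expect a genuine obstacle here: the argument is bookkeeping once Theorem \ref{theorem-corner} is in hand. The one point that really needs Corner's theorem --- and the reason it is quoted as a black box --- is the assertion that the extended endomorphism $\tilde h$ lands in the subring $A\subseteq\Hom(\aa,\aa)$ rather than merely in some completion; everything else is elementary manipulation of the idempotents $\textup{id}_X$, $\textup{id}_Y$ in $A$.
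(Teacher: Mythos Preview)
Your argument is correct and is essentially the paper's own proof: you form the same extension $\tilde h=r=h\circ(\textup{id}_X\cdot)$ of $h$ to an endomorphism of $\aa$, invoke Theorem~\ref{theorem-corner}(\ref{t-corner-ring}) to write it as left multiplication by some $a\in A$, and then use $a=\textup{id}_Y\,a\,\textup{id}_X$ to force the basis terms of $a$ to be maps $X\to Y$. Your write-up is a bit more explicit than the paper's (you spell out the $X=Y$ case and the uniqueness via $a\,\textup{id}_X=a$), but there is no substantive difference.
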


\begin{proof}
  Let $r:\aa\to\aa$ be the composition $\aa\stackrel{\id_X\cdot(-)}{-\!\!\!\longrightarrow}GX\stackrel{h}{\longrightarrow}GY\subseteq\aa$.
  Theorem \ref{theorem-corner}(\ref{t-corner-ring}) implies that $r(x)=ax$ for some $a$ in $A$.
  By the definition of $A$ we have a unique representation $a=\sum_{i\in I}k_i\sigma_i$ where $k_i$ are integers, $\sigma_i:X_i\to Y_i$ are distinct maps in $\Gamma$ (or the identity), and $I$ is finite.
  Since $r=\id_Yr\id_X$ we see that $X_i=X$ and $Y_i=Y$ for $i\in I$, and the $\sigma_i$ above can not be equal to the identity of $A$.
\end{proof}

The following two lemmas are tautological thanks to the inclusion $A\subseteq\aa$ as $A$-modules.

\begin{lemma} \mylabel{lemma-dense}
  For any homomorphism $h:GX\to GY$ with $X$ and $Y$ in $\Gamma$, if $h(\id_X)=\{0\}$ then $h=0$.
\end{lemma}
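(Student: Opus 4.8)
The plan is to reduce the statement to an identity inside the ring $A$, using that $h$ is given by left multiplication and that $\textup{id}_X$ is a specific, distinguished element of $\bar{G}X$. First I would invoke Lemma \ref{lemma-left-multiplication} to write $h$ as left multiplication by a unique $a\in A$, of the form $a=\sum_{i\in I}k_i\sigma_i$ with $\sigma_i:X\to Y$ distinct maps in $\Gamma$ and $I$ finite. The goal then becomes to show $a=0$.

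Next I would observe that $\textup{id}_X$ really does lie in $\bar{G}X$: since $A\subseteq\aa$ as $A$-modules (Theorem \ref{theorem-corner}) and $\bar{G}X=\textup{id}_X(\aa)$, we have $\textup{id}_X=\textup{id}_X\cdot\textup{id}_X\in\textup{id}_X(\aa)=\bar{G}X$. Now compute $h(\textup{id}_X)=a\cdot\textup{id}_X$, where the product is taken in $\aa$. By Theorem \ref{theorem-corner}(\ref{t-corner-ring}) the left action of $A$ on $\aa$ extends the ring multiplication of $A$ on itself, so $a\cdot\textup{id}_X$ equals the product $a\,\textup{id}_X$ computed in $A$. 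Since $\sigma_i\,\textup{id}_X=\sigma_i$ for each $i$ (as $\sigma_i$ has source $X$), we get $a\,\textup{id}_X=a$. Therefore $h(\textup{id}_X)=a$, viewed as an element of $A\subseteq\bar{G}Y$. The hypothesis $h(\textup{id}_X)=0$ then forces $a=0$, hence $h=0$.

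I do not expect a genuine obstacle here: the lemma is essentially bookkeeping once Lemma \ref{lemma-left-multiplication} and the module inclusion $A\subseteq\aa$ are in hand. The only point requiring a moment's care is making the identification $\textup{id}_X\in\bar{G}X$ precise and noting that evaluating the multiplication operator $a\cdot(-)$ at the element $\textup{id}_X$ recovers $a$ itself — which is exactly where the compatibility of the two actions (action on $\aa$ versus multiplication in $A$) is used.
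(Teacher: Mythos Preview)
Your proof is correct and follows essentially the same route as the paper: invoke Lemma~\ref{lemma-left-multiplication} to write $h$ as left multiplication by $a=\sum k_i\sigma_i$, evaluate at $\textup{id}_X$ using $\sigma_i\,\textup{id}_X=\sigma_i$ to get $h(\textup{id}_X)=a$, and conclude $a=0$. The only difference is cosmetic---you spell out why $\textup{id}_X\in\bar{G}X$ and why the two $A$-actions agree, while the paper leaves this implicit and instead phrases the conclusion as $k_i=0$ for all $i$.
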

\begin{proof}
   Lemma \ref{lemma-left-multiplication} yields an
   $a=\sum_{i\in I}k_i\sigma_i$ such that $h(x)=ax$ for $x\in GX$, and $\sigma_i:X\to Y$ are distinct maps in $\Gamma$.
   Then $h(\id_X)=\sum k_i\sigma_i\id_X=\sum k_i\sigma_i\in GY$.
   The assumption that $h(\id_X)=0$ implies $k_i=0$ for all $i$ and therefore $a=0$ and $h=0$.
\end{proof}

\begin{lemma}\mylabel{lemma-factors}
  If a graph $W$ in $\Gamma$, a monomorphism $\varphi:X\to Y$ in $\Gamma$ and a homomorphism $h:GX\to GY$ fit into the following diagram
  $$
  \xymatrix{
    {\{\id_W\}} \ar[r]
        \ar@{}[d]|(0.44){\rotatebox{-90}{$\subseteq$}} &
      GX \ar[d]^{G\varphi} \\
    GW \ar[r]^h \ar@{-->}[ur]^{\tilde{h}} &
      GY
  }
  $$
  then the dashed arrow $\tilde{h}$ exists.
\end{lemma}

\begin{proof}
  As in Lemma \ref{lemma-dense}, we have $h(\id_W)=\sum k_i\sigma_i\in GY$. There exists $u\in GX$ such that $h(\id_W)=G\varphi(u)$. Remark \ref{remark-presentation-gx} implies that $u$ is uniquely represented as $\sum z_j\tau_j$. We obtain
  $$\sum k_i\sigma_i=\sum z_j\varphi\tau_j$$
  hence each $\sigma_i$ is of the form $\varphi\tau_j$. Since $\varphi$ is a monomorphism these factorizations are unique and both triangles in the diagram above commute.
\end{proof}

\begin{lemma}\mylabel{lemma-factorization-id}
   Let $h:GX\to GY$ be a homomorphism with $X$ countable. If $C\subseteq Y$ is a countable subgraph such that $h(\id_X)\in GC\subseteq GY$ then $h(GX)\subseteq GC$.
\end{lemma}

\begin{proof}
  Suppose, to the contrary, that there exists a countable graph $D$, such that $C\subsetneq D\subseteq Y$, and an element $y\in GD\setminus GC$ which is in the image of $h$. Let $D_*$ be the full graph with the same vertices as $D$ and all possible edges. We may put these into the following diagram.
  $$
    \xymatrix{
      {\{\id_X\}} \ar[r]
          \ar@{}[d]|{\rotatebox{-90}{$\subseteq$}} &
        GC \ar@{}[r]|{\rotatebox{0}{$\subseteq$}} &
        GD \ar@{}[r]|{\rotatebox{0}{$\subseteq$}}
          \ar[dr]|{\rotatebox{-45}{$\subseteq$}}   &  
        GY  \ar[d] \\
      GX \ar[rrr]_{h'} \ar[rrru]_h \ar@{-->}[ur]^{\tilde{h'}} &&&
        GD_*
    }
  $$
  The top line, the left vertical inclusion and the $h$ are given by assumptions. The right vertical map is induced by some extension of the inclusion $D\subseteq D_*$ to $Y$. The homomorphism $h'$ is the composition of $h$ and this extension.

  Lemma \ref{lemma-factors}, applied to $h'$, $X$ as $W$ and the inclusion $C\subseteq D_*$ as the monomorphism $\varphi$, gives us the dashed homomorphism $\tilde{h'}$. The central trapezoid commutes. Thus the image of $h'$ in $GD_*$ is contained in $GC$. This is a contradiction since if $y\in GD$ is not in $GC$ then it is also not in $GC$ when viewed as an element of $GD_*$.
\end{proof}

We will need the following immediate consequence of this Lemma.

\begin{corollary}\mylabel{corollary-factorization}
  If $h:GX\to GY$ is a homomorphism with $X$ countable then there exists a countable subgraph $C\subseteq Y$ such that $h$ factors through $GC\subseteq GY$.
\end{corollary}

Functoriality of $G$ gives us a natural homomorphism $$\gamma:\mathbb{Z}[\Hom_{\g}(X,Y)]\to\Hom(GX,GY).$$

\rem \mylabel{remark-colimits}
  Lemma \ref{lemma-left-multiplication} implies
  that $\gamma$ is an isomorphism when both $X$ and $Y$ are countable. Corollary \ref{corollary-factorization} implies that it is enough that $X$ is countable since then
$$
  \mathbb{Z}[\Hom(X,Y)]\cong\mathbb{Z}[\Hom(X,\colim_{C\in[Y]}C)]\cong
  \colim_{C\in [Y]}\mathbb{Z}[\Hom(X,C)]
  \overset{\colim\gamma}{\underset{\cong}{-\!\!\!-\!\!\!-\!\!\!\longrightarrow}}
$$

$$
  \to\colim_{C\in[Y]}\Hom(GX,GC)
  \overset{\cong}{\longrightarrow}\Hom(GX, \colim_{C\in[Y]}GC)\cong
  \Hom(GX,GY).
$$
The last arrow being an isomorphism is equivalent to Corollary \ref{corollary-factorization}.

\begin{lemma}
\mylabel{lemma-limits-zs}
  Let $\{S_i\}_{i\in I}$ be a diagram of sets. Let
  $\lambda:\mathbb{Z}[\lim S_i]\to\lim\mathbb{Z}[S_i]$ be defined by the universal property of limits. If $I$ is codirected then $\lambda$ is one-to-one and if $I$ is countably codirected then $\lambda$ is an isomorphism.
\end{lemma}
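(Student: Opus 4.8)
The plan is to prove the two assertions by rather different means: injectivity for codirected $I$ by directly separating the threads occurring in an element, and surjectivity for countably codirected $I$ by a stabilization argument on supports, in which the countable codirectedness is the feature that does the real work.

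For injectivity, write a nonzero $u\in\mathbb{Z}[\lim S_i]$ as $\sum_{k=1}^{n}c_k x^{(k)}$ with $c_k\neq0$ and $x^{(1)},\dots,x^{(n)}$ pairwise distinct threads. For each pair $k\neq l$ pick an index $i_{kl}$ at which $x^{(k)}$ and $x^{(l)}$ differ. Since there are finitely many pairs, codirectedness yields one index $i$ lying below all the $i_{kl}$; applying the structure maps $S_i\to S_{i_{kl}}$ shows that $x^{(1)}_i,\dots,x^{(n)}_i$ are pairwise distinct in $S_i$, so $\lambda(u)_i=\sum_k c_k x^{(k)}_i\neq0$ and hence $\lambda(u)\neq0$. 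This step is straightforward.

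For surjectivity, fix $v=(v_i)\in\lim\mathbb{Z}[S_i]$ and set $T_i=\operatorname{supp}(v_i)$, a finite subset of $S_i$, so $v_i=\sum_{t\in T_i}n^i_t\,t$. Compatibility $\mathbb{Z}[f](v_i)=v_j$ for a structure map $f\colon S_i\to S_j$ forces $T_j\subseteq f(T_i)$, so $i\mapsto|T_i|$ is non-increasing as we move up in $I$. Countable codirectedness now bounds $|T_i|$: otherwise one could choose $i_1,i_2,\dots$ with $|T_{i_n}|\ge n$, and a lower bound $i_*$ of this countable family would satisfy $|T_{i_*}|\ge n$ for all $n$. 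Let $M=\max_i|T_i|$ and $I'=\{i:|T_i|=M\}$; since $i\mapsto|T_i|$ is monotone, $I'$ is closed under finite lower bounds, and one checks it is an initial subcategory of $I$, so $\lim_{I'}$ agrees with $\lim_I$ both for $\{S_i\}$ and for $\{\mathbb{Z}[S_i]\}$, and we may assume $I=I'$. Then for $i\le j$ the inclusion $T_j\subseteq f(T_i)$ together with $|f(T_i)|\le|T_i|=|T_j|=M$ forces $f$ to restrict to a bijection $T_i\to T_j$, and comparing coefficients gives $n^j_{f(t)}=n^i_t$. Thus $\{T_i\}$ is an inverse system of $M$-element sets with bijective transition maps: its limit $T$ has $M$ elements, each projection $p_i\colon T\to T_i$ is a bijection, and $T$ is canonically a subset of $\lim S_i$. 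Using common lower bounds, the integers $n^i_{p_i(\xi)}$ coincide for all $i$, defining $n\colon T\to\mathbb{Z}\setminus\{0\}$; put $u=\sum_{\xi\in T}n(\xi)\,\xi\in\mathbb{Z}[\lim S_i]$. Then $\lambda(u)_i=\sum_{\xi\in T}n(\xi)\,p_i(\xi)=\sum_{t\in T_i}n^i_t\,t=v_i$ for every $i$, so $\lambda(u)=v$.

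The crux is the surjectivity argument, and within it the passage to the subdiagram $I'$: this is precisely where countable codirectedness is indispensable (the statement is false for ordinary directed systems, where supports may grow without bound), and it requires checking both that $|T_i|$ is bounded and that $I'$ is initial in $I$, so that restricting to $I'$ changes neither limit. Everything after that — the transition maps becoming bijections on supports, transporting the coefficients, and verifying $\lambda(u)=v$ componentwise — is routine bookkeeping.
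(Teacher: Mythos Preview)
Your proof is correct and follows essentially the same strategy as the paper's: separate the threads at a common lower index for injectivity, and for surjectivity bound the support sizes via countable codirectedness and then exploit that the transition maps become bijections on supports once the size stabilizes. The only difference is cosmetic: the paper picks a single index $i_m$ with $|T_{i_m}|$ maximal and works on $\{i:i\le i_m\}$, which is trivially initial, whereas you pass to $I'=\{i:|T_i|=M\}$ and must check initiality; the paper's choice is slightly more economical but the argument is the same.
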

\begin{proof}
  Let $a=\sum k_ss\in\mathbb{Z}[S]$ be a non-zero element of a free group $\mathbb{Z}[S]$ with some basis $S$. Let $|a|$ denote the {\em support} of $a$, consisting of those $s$ for which $k_s\neq 0$. If $a\in\mathbb{Z}[\lim S_i]$ then, since $I$ is codirected, there exists an $i\in I$ such that $|a|$ projects injectively into $S_i$ and therefore $\lambda(a)\neq 0$.

  Let $a=(a_i)\in\lim\mathbb{Z}[S_i]$. If there exists a sequence $i_n$, $n\in\mathbb{N}$, such that the supremum of the cardinalities of $|a_{i_n}|$ is $\omega_0$ then, since $I$ is countably codirected, there exists $S_{i_0}$ which maps to all $S_{i_n}$; but then $|a_{i_0}|$ must be infinite, a contradiction. Let $i_m\in I$ be such that $|a_{i_m}|$ is largest possible. Then for each $i<i_m$ the map $S_i\to S_{i_m}$ restricts to a bijection $|a_i|\to|a_{i_m}|$, hence the inclusions $|a_i|\subseteq S_i$ lift in a coherent way to
  $\lim_{i<i_m} S_i$. Since the set $\{i\mid i<i_m\}$ is coinitial in $I$, we have $\lim_{i<i_m}S_i=\lim S_i$, and therefore $a$ is in the image of $\lambda$.
\end{proof}

\begin{thm} \mylabel{theorem-embedding-graphs}
  There exists a functor $G$ from the category of graphs to the category of groups which induces natural isomorphisms
  $$
  \gamma:\mathbb{Z}[\Hom_{\g}(X,Y)]\overset{\cong}{\longrightarrow}\Hom(GX,GY)
  $$
\end{thm}

\begin{proof}
  We have a chain of isomorphisms
  $$
  \mathbb{Z}[\Hom(X,Y)]\cong\mathbb{Z}[\Hom(\colim_{C\in[X]}C,Y)]\cong
  \mathbb{Z}[\lim_{C\in[X]}\Hom(C,Y)]{\overset{\lambda}{\longrightarrow}}
  $$
  $$
  \to\lim_{C\in[X]}\mathbb{Z}[\Hom(C,Y)]{\overset{\lim\gamma}{-\!\!\!\longrightarrow}}
  \lim_{C\in[X]}\Hom(GC,GY)\cong
  $$
  $$
  \cong\Hom(\colim_{C\in[X]}GC,GY)\cong\Hom(GX,GY).
  $$
  Lemma \ref{lemma-limits-zs} implies that $\lambda$ is an isomorphism, Remark \ref{remark-colimits} implies that $\lim\gamma$ is an isomorphism as the limit of isomorphisms, and the last isomorphism follows from the definition of $GX$.
\end{proof}

\section{A finite approximation}
\mylabel{section-approximation}

The functor $G$, constructed in the preceding section, would be more convenient if for countable $X$ the group $GX$ were also countable. In general this might be difficult. However, in some cases it requires only a slight modification of the construction of $G$. Let $\Gamma_{fin}$ be the full subcategory of $\Gamma$ consisting of finite graphs. Let $A_{fin}\subseteq A$ be the corresponding countable subring and let $\mathbb{A}_{fin}$ be the countable group described in Theorem \ref{theorem-corner}. We may assume that $\mathbb{A}_{fin}$ is constructed in the same process as $\mathbb{A}$, so that $\mathbb{A}_{fin}\subseteq\mathbb{A}$ as $A_{fin}$-modules. We construct $G_{fin}X$ as in Definition \ref{definition-g} replacing $\mathbb{A}$ with $\mathbb{A}_{fin}$, $\Gamma$ with $\Gamma_{fin}$ and $[X]$ with the subcategory $[X]_{fin}$ consisting of finite subgraphs.

As in the preceding section we have a natural transformation
$$
  \gamma:\mathbb{Z}[\Hom_{\g}(X,Y)]\to\Hom(G_{fin}X,G_{fin}Y).
$$

\begin{theorem}\mylabel{theorem-embedding-graphs-fin}
  Let $G_{fin}$ and $\gamma$ be as above.
  \begin{enumerate}
    \item \label{tegf-cardinality}If $X$ is infinite then $|X|=|G_{fin}X|$.
    \item \label{tegf-transformation} The inclusion of $A_{fin}$-modules $\mathbb{A}_{fin}\subseteq\mathbb{A}$ yields a natural transformation $h:G_{fin}\to G$ consisting of inclusions.
    \item \label{tegf-11} $\gamma$ is one-to-one.
    \item \label{tegf-iso-source} If $X$ is finite then $\gamma$ is an isomorphism.
    \item \label{tegf-iso-hom} If $\Hom(X,Y)$ is finite then $\gamma$ is an isomorphism.
  \end{enumerate}
\end{theorem}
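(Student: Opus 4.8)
The strategy is to mirror, as closely as possible, the arguments of Section \ref{section-embedding}, keeping track of where the passage from $\Gamma$, $[X]$ to their finite versions $\Gamma_{fin}$, $[X]_{fin}$ causes genuine differences. The key structural fact we are given for free is that $\mathbb{A}_{fin}\subseteq\mathbb{A}$ is an inclusion of $A_{fin}$-modules, coming from the observation at the end of Section 2 that Corner's construction restricts along the subring $A_{fin}\subseteq A$.

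\emph{Parts (\ref{tegf-cardinality}) and (\ref{tegf-transformation}).} For (\ref{tegf-transformation}), since $\mathbb{A}_{fin}\subseteq\mathbb{A}$ respects the $A_{fin}$-action, for each finite graph $X_0$ in $\Gamma_{fin}$ the idempotent $\textup{id}_{X_0}$ acts compatibly, so $\bar{G}_{fin}X_0=\textup{id}_{X_0}(\mathbb{A}_{fin})\subseteq\textup{id}_{X_0}(\mathbb{A})=\bar{G}X_0$; these inclusions are natural in $X_0$, and passing to the colimits of Definition \ref{definition-g}(b),(c) (over $[X]_{fin}\subseteq[X]$) gives the natural transformation $h$. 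It consists of inclusions because each $\bar G_{fin}C\subseteq\bar GC$ is an inclusion and the colimits on both sides are directed colimits of inclusions (Lemma \ref{lemma-injective}), and $[X]_{fin}$ is cofinal-enough that no collapsing occurs — more carefully, one checks injectivity on a countable subgraph as in the proof of Lemma \ref{lemma-injective}. For (\ref{tegf-cardinality}): if $X$ is infinite then $[X]_{fin}$ has cardinality $|X|$, each $\bar G_{fin}C$ is countable (it sits inside the countable group $\mathbb{A}_{fin}$), and a directed colimit of $|X|$ many countable sets has cardinality $|X|$; the full embedding used to pass from $\bar G_{fin}$ to $G_{fin}$ only inflates a graph by a bounded factor, so $|G_{fin}X|=|X|$.

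\emph{Parts (\ref{tegf-11}), (\ref{tegf-iso-source}), (\ref{tegf-iso-hom}).} Injectivity of $\gamma$ (\ref{tegf-11}) follows from (\ref{tegf-transformation}): we have a commuting square relating $\gamma$ for $G_{fin}$ to $\gamma$ for $G$ via the monomorphisms $h$ (post-composition with $h_Y$ and the identity on the free group $\mathbb{Z}[\Hom(X,Y)]$), and $\gamma$ for $G$ is an isomorphism by Theorem \ref{theorem-embedding-graphs}; a map that becomes injective after post-composition with a monomorphism is injective. For (\ref{tegf-iso-source}), when $X$ is finite the analogue of Lemma \ref{lemma-left-multiplication} applies verbatim — every homomorphism $\bar G_{fin}X\to\bar G_{fin}Y$ for $X,Y$ in $\Gamma_{fin}$ is left multiplication by a unique element of $A_{fin}$, a finite integer combination of maps $X\to Y$ in $\Gamma_{fin}$ — so $\gamma$ is onto for $X,Y$ finite; then one runs the colimit argument of Remark \ref{remark-colimits} over $[Y]_{fin}$, which requires the finite analogue of Lemma \ref{lemma-factorization}. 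For (\ref{tegf-iso-hom}), when $\Hom(X,Y)$ is finite one writes $X=\colim_{[X]_{fin}}C$ and notes that $\Hom(X,Y)\to\Hom(C,Y)$ stabilizes; combined with (\ref{tegf-iso-source}) applied to a sufficiently large finite $C$ (and naturality), $\gamma$ is an isomorphism.

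\emph{Main obstacle.} The delicate point is the finite analogue of Lemma \ref{lemma-factorization}: given $h:\bar G_{fin}X\to\bar G_{fin}Y$ with $X$ finite, one wants a \emph{finite} $C\subseteq Y$ through which $h$ factors. The proof of Lemma \ref{lemma-factorization} picks $C$ with $h(\textup{id}_X)\in\bar GC$ — fine, since $h(\textup{id}_X)$ is a finite combination $\sum k_i\sigma_i$ by Remark \ref{remark-presentation-gx}, and each $\sigma_i:X\to Y$ has finite image because $X$ is finite, so $C=\bigcup\sigma_i(X)$ is finite — and then argues via the retraction $\nu_C:Y\to C_*$ and Lemma \ref{lemma-dense} that all of $h(\bar G_{fin}X)$ lands in $\bar G_{fin}C$. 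The retraction argument uses $C_*$, the full graph on the vertices of $C$, which is again finite, so Lemma \ref{lemma-factors} and Lemma \ref{lemma-dense} both have finite analogues available; the argument goes through. The one genuine loss — and the reason $\gamma$ is \emph{not} onto in general — is that when $X$ is infinite there is no finite $C$ with $h(\textup{id}_X)\in\bar G_{fin}C$, so the colimit argument of Remark \ref{remark-colimits} breaks down on the source side; this is exactly why (\ref{tegf-iso-source}) and (\ref{tegf-iso-hom}) carry finiteness hypotheses while the clean Theorem \ref{theorem-embedding-graphs} does not.
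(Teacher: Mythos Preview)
Your treatment of (\ref{tegf-cardinality}), (\ref{tegf-transformation}) and (\ref{tegf-iso-source}) matches the paper's, and the discussion under ``Main obstacle'' of the finite analogue of Lemma \ref{lemma-factorization} is accurate and is exactly what makes (\ref{tegf-iso-source}) go through.

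For (\ref{tegf-11}) your route via the natural transformation $h$ is not the paper's and, as written, the ``commuting square'' is not quite right: post-composition with $h_Y$ lands you in $\Hom(G_{fin}X,GY)$, not in $\Hom(GX,GY)$, so you cannot simply quote Theorem \ref{theorem-embedding-graphs}; to finish you would also need precomposition with $h_X$ to be injective on $\Hom(GX,GY)$, which is true but requires its own argument. The paper instead reruns the chain of maps in the proof of Theorem \ref{theorem-embedding-graphs} with $[X]_{fin}$ in place of $[X]$; the only arrow that ceases to be an isomorphism is $\lambda$, and the first half of Lemma \ref{lemma-limits-zs} still gives its injectivity because $[X]_{fin}$ is directed (though not countably directed). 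This is a minor issue.

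The genuine gap is (\ref{tegf-iso-hom}). The claim that ``$\Hom(X,Y)\to\Hom(C,Y)$ stabilizes'' and that this together with (\ref{tegf-iso-source}) yields surjectivity of $\gamma$ does not work. Finiteness of $\Hom(X,Y)$ guarantees only that for some finite $C_0$ the restriction $\Hom(X,Y)\to\Hom(C_0,Y)$ is injective; the sets $\Hom(C,Y)$ for finite $C\supseteq C_0$ can be arbitrarily large, and a homomorphism $f:G_{fin}X\to G_{fin}Y$ restricted to $G_{fin}C$ is a combination $\sum k_\tau\tau$ of maps $\tau:C\to Y$ that need not extend to $X$ at all. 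The paper's argument is substantially different: it views $f$ as a compatible family $(a_C)\in\lim_{C\in[X]_{fin}}\mathbb{Z}[\Hom(C,Y)]$, attaches to each $a_C$ its finite support $I_C$ (augmented by an extra element $0$), builds explicit \emph{epimorphisms} $\nu^C_B:I_C\to I_B$ recording how summands merge or cancel under restriction, and observes that any nonzero element of $\lim_C I_C$ is a genuine map $X\to Y$. Finiteness of $\Hom(X,Y)$ then forces this limit to be finite, and $f$ unwinds as an integer combination of maps $X\to Y$. This inverse-limit-of-supports step is the idea your outline is missing.
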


\begin{proof}
  In the colimit construction of $G_{fin}X$ we have a countable $G_{fin}C\subseteq\aa_{fin}$ for every finite subgraph $C$ of $X$, hence (\ref{tegf-cardinality}). For $C$ in $[X]_{fin}$ we have
  $\id_C\in A_{fin}$, hence $\id_C\aa_{fin}\subseteq \id_C\aa$, that is, $G_{fin}C\subseteq GC$. Then $h:G_{fin}X\to GX$ is the colimit map. The poset $[X]_{fin}$ is not cofinal in $[X]$ but the injectivity of $h$ is retained since Lemma \ref{lemma-injective} implies that $GX$ is a directed colimit of inclusions. This proves (\ref{tegf-transformation}). The only reason $[X]$ had to contain the countable subgraphs of $X$, in the preceding section, is the countable codirectedness of the diagram $I$ in Lemma \ref{lemma-limits-zs}. With only finite subgraphs we have in $[X]_{fin}$ we obtain the injectivity part of that lemma, hence (\ref{tegf-11}) is implied by the finite version of Theorem \ref{theorem-embedding-graphs}. Also (\ref{tegf-iso-source}) is a finite version of Remark \ref{remark-colimits} which makes no use of infinite subgraphs.

  It remains to prove the surjectivity part of (\ref{tegf-iso-hom}). An element $a$ of $$\Hom(G_{fin}X,G_{fin}Y)\cong\lim_{C\in[X]_{fin}}\Hom(G_{fin}C,G_{fin}Y)$$ is a sequence $a=(a_C)_{C\in[X]_{fin}}$ of compatible homomorphisms. We know by (\ref{tegf-iso-source}) that each $a_C$ may be uniquely represented as
  $a_C=\sum_{\sigma\in I_C\setminus\{0\}}k_\sigma\sigma$ where $I_C$ is a finite set of maps $\sigma:C\to Y$ and $0$, and the $k_\sigma$ are nonzero integers. An inclusion $B\subseteq C$ induces restriction of $a_C$ to $a_B$ which further induces an epimorphism $\nu^C_B:I_C\to I_B$, defined as follows. Fix $\sigma_0\in I_C\setminus\{0\}$ and let $\tau_0=\sigma_0|_B$. If the sum of $k_\sigma$ over those $\sigma\in I_C\setminus\{0\}$ for which $\sigma|_B=\tau_0$ is $0$ then $\nu^C_B(\sigma_0)=0$, otherwise $\nu^C_B(\sigma_0)=\tau_0$. Define $\nu^C_B(0)=0$. Let $I=\lim_{C\in[X]_{fin}}I_C$. If $\varphi\neq 0$ is an element of $I$ then it corresponds to a map $\varphi:X\to Y$. If $\Hom(X,Y)$ is finite then $I$ is finite, hence $a$ can be written as
  $\sum_{\varphi\in I\setminus\{0\}}k_\varphi\varphi$, which proves (\ref{tegf-iso-hom}).
\end{proof}

\section {Some quick applications}
\mylabel{section-applications}

In this section we present a collection of applications of the functors $G$ and $G_{fin}$, constructed in the preceding sections.

\intertitle{Rigid systems of groups}

Vop\v enka, Pultr and Hedrl\'in proved in \cite{vopenka} that for every infinite cardinal $\kappa$ there exists a graph $X$ of cardinality $\kappa$ such that $\Hom(X,X)=\{\id_X\}$. An easy modification yields a rigid system of graphs $\{X_i\}_{i<2^\kappa}$. Rigid means that if $f:X_i\to X_j$ is a map then $i=j$ and $f$ is the identity. Applying Theorem \ref{theorem-embedding-graphs-fin} we see that $\{G_{fin}X_i\}_{i<2^\kappa}$ is a rigid system of groups, meaning that if $h:GX_i\to GX_j$ is a nonzero homomorphism then $i=j$ and $h(x)=rx$ for some integer $r$. Each group of this system has cardinality $\kappa$. This is an alternative proof of the result of Shelah \cite[Theorem 2.1]{shelah}.

\intertitle{A class of groups}

For every infinite cardinal $\kappa$ we construct a graph $X_\kappa$ whose vertices are ordinals $\alpha<\kappa$ and edges $\alpha\to\beta$ are relations $\alpha<\beta$. Applying Theorem \ref{theorem-embedding-graphs-fin} we obtain a proper class of groups $G_{fin}X_\kappa$ such that for $\kappa<\lambda$ we have $\Hom(G_{fin}X_\lambda,G_{fin}X_\kappa)=0$. Note that using \cite{vopenka} we may modify the construction of $X_\kappa$ so that the additional condition $\Hom(G_{fin}X_\kappa,G_{fin}X_\kappa)=\mathbb{Z}$ is satisfied. We learned from G\"obel that an analogous class may be constructed inductively using Theorem 1 of \cite{gobel}.

\intertitle{Generalized pure subgroups}

If $\kappa$ is an infinite cardinal, a subgroup $N$ of $M$ is said to be $\kappa$-{\em pure} if $N$ is a direct summand of every subgroup $N'$ such that $N\subseteq N'\subseteq M$ and $|N'/N|<\kappa$. Megibben proved in \cite[Proposition 3.1]{megibben} that for every infinite cardinal $\kappa$ there exists a group containing a $\kappa$-pure subgroup which is not $\kappa^+$-pure. Here, $\kappa^+$ denotes the successor cardinal of $\kappa$.

Let $\lambda>\kappa$ be an ordinal and let $X_\alpha$ be graphs as in the preceding subsection constructed for ordinals $\alpha<\lambda$. Let $W_\lambda$ be the wedge sum of $X_\alpha$, $\alpha<\lambda$, with the $0$'s identified. We claim that $G_{fin}W_\kappa\subseteq G_{fin}W_\lambda$ is $\kappa$-pure but not $\kappa^+$-pure. If $N'$ contains $G_{fin}W_\kappa$ and $|N'/G_{fin}W_\kappa|<\kappa$ then $N'$ is generated by $G_{fin}W_\kappa$ and less than $\kappa$ elements, hence is contained in $G_{fin}Y$ for $Y=W_\kappa\cup Y_0$ where $Y_0$ is some subgraph of $W_\lambda$ of cardinality less than $\kappa$. Clearly $W_\kappa$ is a retract of $Y$ and therefore $G_{fin}W_\kappa$ is a direct summand of $G_{fin}Y$. On the other hand $G_{fin}W_{\kappa+1}$ has cardinality $\kappa$ and we have no maps from $W_{\kappa+1}$ to $W_{\kappa}$, hence $\Hom(G_{fin}W_{\kappa+1},G_{fin}W_{\kappa})=0$ and therefore $G_{fin}W_\kappa$ is not a direct summand of $G_{fin}W_{\kappa+1}$.

Megibben's examples are $p$-groups. Here we obtain torsion free groups.

\intertitle{Chains of group localizations}

In \cite{przezdziecki-chains} it is shown that a self-free abelian group, constructed by Dugas in the proof of \cite[Theorem 2.1]{dugas}, yields a chain of groups $M_\alpha$, $\alpha<\lambda$, where $\lambda$ is any nonmeasurable cardinal. Inclusions in this chain, $M_\alpha\subseteq M_\beta$, for $\alpha<\beta<\lambda$, are localizations in the sense that they induce isomorphisms $\Hom(M_\beta,M_\beta)\cong\Hom(M_\alpha,M_\beta)$.

Any ordinal $\lambda$, viewed as a category, fully embeds into $\g$, as any small category does by \cite[8.5 and 8.6 on page 53 and Theorem on page 104]{trnkova-book}. Thus Theorem \ref{theorem-embedding-graphs} removes the above restriction to nonmeasurable cardinals. The price we pay here is that the groups which appear in the chain are not self-free as was the case in \cite{przezdziecki-chains}. Assuming the negation of Vop\v enka's principle (such an assumption is consistent with ZFC) \cite[Lemma 6.3]{adamek-rosicky} implies that the ordered class of all ordinals, considered as a category, fully embeds into $\g$ and then Theorem \ref{theorem-embedding-graphs} yields an unbounded chain indexed by all ordinals. In fact Theorem \ref{theorem-embedding-graphs} and \cite[Lemma 6.3]{adamek-rosicky} imply that nonexistence of such a chain in $\a$ is equivalent to Vop\v enka's principle.

\section{Orthogonal subcategory problem in the category of abelian groups}
\mylabel{section-orthogonal}

In this section we prove that analogues of two large cardinal principles, formulated in the category of abelian groups, are equivalent to their original statements in the category of graphs: Every orthogonality class in $\a$ is reflective if and only if weak Vop\v enka's principle holds (Proposition \ref{proposition-weak-vopenka}); and every orthogonality class in $\a$ is a small-orthogonality class if and only if Vop\v enka's principle holds (Proposition \ref{proposition-vopenka}).

An object $X$ and a morphism $f:A\to B$ in a category $\mathcal{C}$ are {\em orthogonal}, denoted $f\perp X$, if $f$ induces a bijection
\begin{equation}\label{equation-orthogonal-groups}
  \Hom_\mathcal{C}(B,X)\overset{\cong}{\longrightarrow}\Hom_\mathcal{C}(A,X).
\end{equation}
For a class $\mathcal{D}$ of objects, $\mathcal{D}^\perp$ denotes the class of morphisms orthogonal to every member of $\mathcal{D}$, and analogously $\mathcal{E}^\perp$ denotes the class of objects orthogonal to every element of the class $\mathcal{E}$ of morphisms. Classes of objects of the form $\mathcal{D}=\mathcal{E}^\perp$ are called {\em orthogonality classes}, and when $\mathcal{E}$ is a set, not a proper class, $\mathcal{D}=\mathcal{E}^\perp$ is a called a {\em small-orthogonality class}. A pair $(\mathcal{E},\mathcal{D})$ of a class $\mathcal{E}$ of morphisms and a class $\mathcal{D}$ of objects such that $\mathcal{D}=\mathcal{E}^\perp$ and $\mathcal{E}=\mathcal{D}^\perp$ is called an {\em orthogonal pair}. It is easy to see that $\mathcal{D}^{\perp\perp\perp}=\mathcal{D}^\perp$ and $\mathcal{E}^{\perp\perp\perp}=\mathcal{E}^\perp$.

A class $\mathcal{D}$ of objects is called {\em reflective} if for every object $A$ in the category $\mathcal{C}$ there exists a morphism $f:A\to\bar{A}$ in $\mathcal{D}^\perp$ such that $\bar{A}$ is in $\mathcal{D}$. The morphism $f$ is called a {\em reflection} of $A$ in $\mathcal{D}$. The reflections are unique up to isomorphism. The following is an observation dating back to \cite[Exercise 3.F]{freyd-abelian-categories}.

\rem \label{remark-reflector} If every object of $\mathcal{C}$ has a reflection in $\mathcal{D}$ we obtain a functor $L:\mathcal{C}\to\mathcal{C}$ which assigns to each object $X\in\mathcal{C}$ a reflection of $X$ in $\mathcal{D}$. Such a functor $L$ is called a {\em reflector} in \cite{freyd-abelian-categories} or, in research inspired by algebraic topology (see e.g. \cite{casacuberta-advances}), a {\em localization}. The localization is a left adjoint of the inclusion of $\mathcal{D}$ into $\mathcal{C}$ as a full subcategory. It comes with a natural transformation $\eta:\rm{Id}\to L$ such that $\eta_{LX}=L\eta_X:LX\to LLX$ is an isomorphism for every $X$ in $\mathcal{C}$.

Every localization is associated with an orthogonal pair $(\mathcal{E},\mathcal{D})$ such that $\mathcal{E}$ consists of those $f$ for which $Lf$ is an isomorphism and $\mathcal{D}$ consists of those $X$ for which $\eta_X:X\to LX$ is an isomorphism. This orthogonal pair uniquely determines the localization. The converse question whether an orthogonal pair $(\mathcal{E},\mathcal{D})$ is associated with a localization, that is whether $\mathcal{D}$ is reflective, is known as the {\em orthogonal subcategory problem}. General answers to this problem turned out to fit into the hierarchy of large cardinals \cite[page 472]{kanamori}. They are formulated in the category of graphs \cite[Chapter 6]{adamek-rosicky} as follows:

\noindent{\bf Weak Vop\v enka's principle:} Every orthogonality class in the category of graphs is reflective.

\noindent{\bf Vop\v enka's principle:} Every orthogonality class in the category of graphs is a small-orthogonality class (in particular reflective).

Both principles imply analogous statements in every locally presentable category, in particular in $\a$: see \cite[Theorem 6.22 and Corollary 6.24]{adamek-rosicky}.

\begin{lemma}\mylabel{lemma-epi}
  Let $\mathcal{D}_0$ be a class of objects in $\g$ or $\a$, and $\mathcal{E}=\mathcal{D}_0^\perp$. For every object $A$ there exists a diagram
  $$A\overset{\alpha}{\longrightarrow}A'\overset{e}{\longrightarrow}Z_A$$
  where $\alpha$ is an epimorphism in $\mathcal{E}$, $e$ is a monomorphism, and $Z_A$ is a product of elements of $\mathcal{D}_0$.
\end{lemma}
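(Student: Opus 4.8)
The plan is to build $Z_A$ as a product over a suitably chosen set of "test maps" into objects of $\mathcal{D}_0$, then to factor the canonical map $A\to Z_A$ through its image. First I would consider the class of all morphisms $g:A\to D$ with $D\in\mathcal{D}_0$; a priori this is a proper class, but since every such $D$ is an object of a locally presentable category (both $\g$ and $\a$ are), each $g$ factors through a subobject of $D$ generated by the image of $A$, and there is, up to isomorphism, only a set of objects of bounded size. So I can choose a \emph{set} $\{g_j:A\to D_j\}_{j\in J}$ of representatives with $D_j\in\mathcal{D}_0$, such that every morphism from $A$ to an object of $\mathcal{D}_0$ factors (not necessarily uniquely) through some $g_j$ followed by a morphism in $\mathcal{D}_0$ — more precisely, I only need that the induced map $A\to\prod_{j\in J}D_j$ has the same "orthogonality behaviour" as the whole class. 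Set $Z_A=\prod_{j\in J}D_j$, which is a product of elements of $\mathcal{D}_0$ as required, and let $A\to Z_A$ be the map with components $g_j$.

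Next I would take the (epi, mono) factorization $A\overset{\alpha}{\longrightarrow}A'\overset{e}{\longrightarrow}Z_A$ of this map, which exists in both $\g$ and $\a$: in $\a$ one takes $A'=A/\ker$, and in $\g$ one takes $A'$ to be the image subgraph (with the induced relation, and one checks the standard fact that in $\g$ this gives a genuine epi–mono factorization). By construction $e$ is a monomorphism and $\alpha$ is an epimorphism, so it remains to show $\alpha\in\mathcal{S}=\mathcal{D}_0^\perp$, i.e. that for every $D\in\mathcal{D}_0$ the map $\alpha^*:\Hom(A',D)\to\Hom(A,D)$ is a bijection. Surjectivity is the heart of the matter: given $g:A\to D$ with $D\in\mathcal{D}_0$, I must produce $\bar g:A'\to D$ with $\bar g\alpha=g$. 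By the choice of the set $\{g_j\}$, $g$ factors as $A\overset{g_j}{\to}D_j\to D$ for some $j$ (after enlarging the index set, if necessary, to be closed under composition with morphisms in $\mathcal{D}_0$, or by choosing $J$ large enough from the start); since $g_j$ itself factors through $\alpha$ via the projection $Z_A\to D_j$ restricted along $e$ — that is, $g_j = (\mathrm{pr}_j\circ e)\circ\alpha$ — we get $g = (\text{composite }D_j\to D)\circ\mathrm{pr}_j\circ e\circ\alpha$, which exhibits the desired factorization of $g$ through $\alpha$. Injectivity of $\alpha^*$ is immediate since $\alpha$ is epi.

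The main obstacle I expect is the set-theoretic bookkeeping that makes $J$ a set rather than a proper class while still "seeing" every map into $\mathcal{D}_0$: one must invoke local presentability (every object is a directed colimit of presentable ones, and there is only a set of isomorphism types of presentable objects below a fixed bound) to argue that a map $A\to D$ always factors through one of $A$-bounded size, and then check that the factoring object can be taken inside $\mathcal{D}_0$ — which is not automatic unless $\mathcal{D}_0$ is closed under the relevant subobjects, so in general one instead keeps $D$ itself but restricts attention to the \emph{image} of $A$ in $D$, packaging only a set's worth of such images. A secondary, more routine point is verifying that $\g$ genuinely admits (epi, mono) factorizations with "mono" in the sense needed here (injective-on-vertices-and-edges), which is standard. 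Everything else is formal manipulation of the universal properties of products and of image factorizations.
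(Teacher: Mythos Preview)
Your overall strategy---form a product of objects of $\mathcal{D}_0$, map $A$ into it diagonally, and take the epi--mono factorization---is exactly the paper's. The difference is in how the product is indexed, and here the paper's choice is much lighter than yours. The paper indexes over \emph{pairs of elements} $(x,y)$ of $A$: for each such pair it chooses, if one exists, a single map $\alpha_{xy}:A\to Z_{xy}$ into $\mathcal{D}_0$ with $\alpha_{xy}(x)\neq\alpha_{xy}(y)$. This is automatically a set, so all of your local-presentability bookkeeping evaporates. Surjectivity of $\alpha^*$ is then immediate: if $\alpha(x)=\alpha(y)$ then no map into $\mathcal{D}_0$ separates $x$ and $y$, so any $g:A\to Z$ with $Z\in\mathcal{D}_0$ is constant on the fibres of $\alpha$ and hence descends to $A'$.

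There is also a genuine soft spot in your surjectivity argument. You claim that an arbitrary $g:A\to D$ with $D\in\mathcal{D}_0$ factors as $A\xrightarrow{g_j}D_j\to D$ for some $j$. Your own obstacle paragraph correctly notes that the image of $A$ in $D$ need not lie in $\mathcal{D}_0$, so you cannot in general arrange such a factorization with $D_j\in\mathcal{D}_0$; the proposed fix (``keep $D$ itself but package only a set's worth of images'') does not yield a morphism $D_j\to D$. What you actually need, and what the image-equivalence idea \emph{does} give, is only that $\bigcap_j\ker g_j=\bigcap_{g}\ker g$ (and the analogous statement on pairs in $\g$), which suffices for $g$ to descend through $\alpha$. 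So your route can be completed, but the specific factorization step as written is not correct, and in any case the paper's pair-indexing gets you there with no set-theoretic detour at all.
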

\begin{proof}
  For every pair $x$, $y$ of elements of $A$ we choose, if it exists, a morphism $\alpha_{xy}:A\to Z_{xy}$ such that $Z_{xy}$ is in $\mathcal{D}_0$ and $\alpha_{xy}(x)\neq \alpha_{xy}(y)$. Let $Z_A=\prod Z_{xy}$ and let
  $d:A\to Z_A$ be the diagonal of $\alpha_{xy}$'s. Define $A'=d(A)$, let $\alpha:A\to A'$ be the restriction of $d$ and let $e$ be the inclusion $A'$ into $Z_A$. For every $Z$ in $\mathcal{D}_0$ the morphism $\alpha$ induces a bijection
  $$\alpha^*:\Hom(A',Z)\to\Hom(A,Z),$$
  hence $\alpha$ is in $\mathcal{E}$.
\end{proof}

Let $(\mathcal{E},\mathcal{D})$ be an orthogonal pair in $\g$. Theorem \ref{theorem-embedding-graphs} implies that every $f$ in $G\mathcal{E}$ is orthogonal to every $M$ in $G\mathcal{D}$, hence the pair $(G\mathcal{E},G\mathcal{D})$ extends to an orthogonal pair $(\overline{\mathcal{E}},\overline{\mathcal{D}})$ defined by $\overline{\mathcal{E}}=G\mathcal{D}^\perp$ and $\overline{\mathcal{D}}=\overline{\mathcal{E}}^\perp$.

\begin{lemma}
  If the orthogonal pair $(\overline{\mathcal{E}},\overline{\mathcal{D}})$, defined above, is associated with a localization $L$ then $(\mathcal{E},\mathcal{D})$ is also associated with a localization.
\end{lemma}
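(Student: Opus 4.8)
The plan is to transfer a reflection in $\a$, provided by the localization $L$, back to a reflection in $\g$ using the functor $G$ together with the identification $\Hom(GX,GY)\cong\mathbb{Z}[\Hom_{\g}(X,Y)]$ from Corollary \ref{corollary-embedding-graphs}. Given a graph $A$, I would first form $GA$ and apply $L$ to obtain the reflection $\eta_{GA}:GA\to L(GA)$, which lies in $\overline{\mathcal{S}}=G\mathcal{D}^{\perp}$ and has target $L(GA)\in\overline{\mathcal{D}}$. The goal is to manufacture from this a morphism $g:A\to\bar A$ in $\g$ with $\bar A\in\mathcal{D}$ and $g\in\mathcal{D}^\perp=\mathcal{S}$. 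The key point is that orthogonality of $GA$-type objects against the class $G\mathcal{D}$ is controlled, via $\gamma$, by orthogonality of the underlying graphs against $\mathcal{D}$: for $M\in\mathcal{D}$, a graph map $f:A\to B$ induces a bijection $\Hom_\g(B,M)\to\Hom_\g(A,M)$ if and only if $Gf:GA\to GB$ induces a bijection $\Hom(GB,GM)\to\Hom(GA,GM)$, since applying the free-abelian-group functor to a map of sets is a bijection exactly when the original map is.

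The main obstacle is that $L(GA)$ need not be of the form $GB$ for any graph $B$ — the localization can leave the image of $G$. To handle this I would use Lemma \ref{lemma-epi} applied to $A$ in $\g$ with $\mathcal{D}_0=\mathcal{D}$: it yields $A\overset{\alpha}{\to}A'\overset{e}{\to}Z_A$ with $\alpha$ an epimorphism in $\mathcal{S}$, $e$ a monomorphism, and $Z_A$ a product of objects of $\mathcal{D}$. Since $\mathcal{D}=\mathcal{S}^\perp$ is an orthogonality class and, crucially, is closed under products (products of orthogonal objects are orthogonal), $Z_A\in\mathcal{D}$. Thus $A'$ already embeds into an object of $\mathcal{D}$, and a reflection $\bar A$ of $A$ in $\mathcal{D}$, if it exists, can be obtained as a suitable subobject of $Z_A$ through which $\alpha$ factors and which absorbs all maps from $A'$ into members of $\mathcal{D}$. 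To pin down $\bar A$, I would transport the problem to $\a$: apply $G$ to the diagram $A\to A'\to Z_A$, note $G$ preserves monomorphisms (Lemma \ref{lemma-injective}) and sends products into monomorphisms into products, and use the localization $L$ on $GA'\hookrightarrow GZ_A$. The object $L(GA')$ sits between $GA'$ and (a completion of) $GZ_A$; the essential claim is that the idempotent nature of localization, combined with $GZ_A$ being $\overline{\mathcal{S}}$-local (as $Z_A\in\mathcal{D}$ implies $GZ_A\in G\mathcal{D}\subseteq\overline{\mathcal{D}}$), forces $L(GA')$ to be realized as $G\bar A$ for a subgraph $A'\subseteq\bar A\subseteq Z_A$. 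Here one uses that subobjects of $GZ_A$ in the image of $G$ correspond to subgraphs of $Z_A$, which follows from the structure of $G$ as a colimit of the $\bar G C$ and Lemma \ref{lemma-factorization}.

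Once $\bar A\in\mathcal{D}$ is identified with $G\bar A=L(GA')$, the composite $g:A\overset{\alpha}{\to}A'\hookrightarrow\bar A$ is the candidate reflection. To verify $g\in\mathcal{S}=\mathcal{D}^\perp$, take any $M\in\mathcal{D}$; then $GM\in\overline{\mathcal{D}}$, so $G g=\eta_{GA'}\circ G\alpha$ is a composite of an $\overline{\mathcal{S}}$-map ($\eta_{GA'}$, by Remark \ref{remark-reflector}) and the image under $G$ of the $\mathcal{S}$-map $\alpha$ (hence itself in $\overline{\mathcal{S}}$ since $G$ of an $\mathcal{S}$-map is $\overline{\mathcal{S}}$, by the orthogonality bookkeeping above); therefore $Gg\in\overline{\mathcal{S}}$, which via $\gamma$ gives that $g$ induces a bijection $\Hom_\g(\bar A,M)\to\Hom_\g(A,M)$, i.e. $g\perp M$. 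Thus $g$ is a reflection of $A$ in $\mathcal{D}$, and since $A$ was arbitrary, $\mathcal{D}$ is reflective, i.e. $(\mathcal{S},\mathcal{D})$ is associated with a localization. The step I expect to be delicate is the realization of $L(GA')$ as $G\bar A$ — i.e. showing the localization does not escape the image of $G$ when applied to subobjects of $GZ_A$ with $Z_A\in\mathcal{D}$ — and it is where the concrete colimit description of $\bar G$ together with Lemmas \ref{lemma-injective}, \ref{lemma-factorization} and \ref{lemma-dense} will do the work.
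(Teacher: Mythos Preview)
Your outline follows the paper's opening moves correctly: reduce via Lemma~\ref{lemma-epi} so that $A'$ embeds in a product $Z_A$ of objects of $\mathcal{D}$, apply $G$, and bring in the localization $L$ on the $\a$-side. The gap is precisely the step you flag as ``delicate'': you assert that $L(GA')$ is realized as $G\bar A$ for some subgraph $\bar A\subseteq Z_A$. Nothing in Lemmas~\ref{lemma-injective}, \ref{lemma-dense}, \ref{lemma-factorization} gives this. Those lemmas control homomorphisms \emph{between} objects in the image of $G$; they say nothing about an arbitrary subgroup of $GZ_A$ (such as $L(GA')$) lying in that image. Idempotence of $L$ and locality of $GZ_A$ only yield a monomorphism $L(GA')\hookrightarrow GZ_A$; they do not force $L(GA')$ to have the very special form of a colimit of groups $\bar G C$ over finite or countable subgraphs. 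Your claim that ``subobjects of $GZ_A$ in the image of $G$ correspond to subgraphs of $Z_A$'' is beside the point unless you already know $L(GA')$ is in the image of $G$, which is exactly what is in question.

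The paper does \emph{not} attempt to realize $L(GX)$ as $G\bar A$. Instead, after embedding $LGX$ into $\prod GZ_i$ and arranging (via the decomposition $h_i=\sum k_\sigma\sigma$) that the components $GX\to GZ_i$ come from graph maps, it sets $Z=\prod Z_i$ and obtains only a monomorphism $e':LGX\hookrightarrow GZ$. The candidate reflection is then built \emph{inside $\g$}: one takes $X^\bullet\subseteq Z$ maximal among subgraphs containing $X$ with $X\subseteq X^\bullet$ in $\mathcal{S}$ (such a maximal element exists because $\mathcal{S}$ is closed under colimits). Showing $X^\bullet\in\mathcal{D}$ is a separate argument: given $\varepsilon:W_1\to W_2$ in $\mathcal{S}$ and $f:W_1\to X^\bullet$, one forms the pushout $P$, factors $P\to Z$ through an epi--mono $P\to P'\hookrightarrow Z\times Z_P$, and only \emph{then} invokes $L$ to see that $GP'\to LGX^\bullet$ is a monomorphism, forcing $X^\bullet\to P'$ to be an isomorphism by maximality. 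In short, the localization is used as a tool to certify injectivity at one step, not as the source of the object $\bar A$; you should replace the unjustified identification $L(GA')=G\bar A$ with this maximal-subgraph-plus-pushout argument.
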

\begin{proof}
  Remark \ref{remark-reflector} implies that it is enough to find for every graph $X$ a map
  $X\to Y$ in $\mathcal{E}$ such that $Y$ is in $\mathcal{D}$. Applying Lemma \ref{lemma-epi} to $\mathcal{D}_0=\mathcal{D}$ and $A=X$ we obtain a map $X\to X'$ in $\mathcal{E}$ such that $X'$ embeds into some element of $\mathcal{D}$. Now it is enough to find a map $X'\to Y$ in $\mathcal{E}$ such that $Y$ is in $\mathcal{D}$. Therefore, in the remainder of the proof, we may assume that $X$ embeds into an element of $\mathcal{D}$.

  Applying Lemma \ref{lemma-epi} to $\mathcal{D}_0=G\mathcal{D}$ and $A=LGX$ we obtain an epimorphism $a:LGX\to A'$ in $\overline{\mathcal{E}}$ such that $A'$ embeds into a product of elements of $G\mathcal{D}$. Since $LGX$ is in $\overline{\mathcal{D}}$ and $\alpha$ is an epimorphism in $\overline{\mathcal{E}}$ we see that $\alpha$ is an isomorphism and therefore we obtain an embedding
  $$e:LGX\to\prod GZ_i$$

  Theorem \ref{theorem-embedding-graphs} implies that each composition $h_i$ as in
  \begin{equation}
  \mylabel{equation-product}
    \raisebox{0pt}[30pt][0pt]{}
    \xymatrix{
      GX \ar[r]^\eta \ar@/^2pc/[rrr]^{h_i} &
      LGX \ar[r]^e &
      {\prod GZ_i} \ar[r]^{\pi_i} &
      GZ_i
    }
  \end{equation}
  is a combination $\sum_{\sigma\in I_i}k_\sigma\sigma$ of homomorphisms induced by maps $\sigma_i:X\to Z_i$. We replace each $GZ_i$ in the product with $\prod_{\sigma\in I_i}GZ_i$ and each $h_i$ with the diagonal $GX\to\prod_{\sigma\in I_i}GZ_i$ of the maps $\sigma\in I_i$. Since $\eta$ is in $\overline{\mathcal{E}}$ and the products are in $\overline{\mathcal{D}}$, each diagonal factors through $\eta$ and therefore we may assume that each $h_i$ in (\ref{equation-product}) is induced by a map of graphs $\sigma_i:X\to Z_i$.

  We look at the following diagram:
  \begin{equation}
  \mylabel{equation-xlx}
  \xymatrix{
    &
      {\mbox{\ \ \ \ \ \ \ \ }G\prod Z_i = GZ} \ar[dr]^\pi \\
    GX \ar[r]^\eta \ar[ur]^{G\varphi} &
      LGX \ar@{-->}[u]^{e'} \ar[r]^{e} &
      {\prod GZ_i}
  }
  \end{equation}
  The graph $Z$ is defined as $\prod Z_i$, and the map $\varphi$ is the diagonal of $\sigma_i:X\to Z_i$ as above. At the beginning of the proof we assumed that $X$ embeds into an element of $\mathcal{D}$, hence we may assume that at least one of the $\sigma_i$'s is injective and therefore $\varphi$ is one-to-one. The homomorphism $\pi$ is the product homomorphism. The homomorphism $e'$ exists since $GZ$ is in $\overline{\mathcal{D}}$ and therefore $G\varphi$ uniquely factors through $\eta$ which is in $\overline{\mathcal{E}}$. Since $e\eta=\pi e'\eta$ and $\prod GZ_i$ is in $\overline{\mathcal{D}}$ and $\eta$ in $\overline{\mathcal{E}}$, we have $e=\pi e'$. We see that $e'$ is a monomorphism since $e$ is.

  Up to this point we have reduced our situation to the case when there exists a graph $Z$ in $\mathcal{D}$ and a monomorphism $\varphi:X\to Z$ which induces a monomorphism $e':LGX\to GZ$ as in (\ref{equation-xlx}). In the remainder we enlarge $X$ to an $X^\bullet$ such that $\varphi(X)\subseteq X^\bullet\subseteq Z$, the inclusion $\varphi(X)\subseteq X^\bullet$ is in $\mathcal{E}$ and $X^\bullet$ is in $\mathcal{D}$.

  We identify $X$ with $\varphi(X)\subseteq Z$. Since the class $\mathcal{E}$ is closed under colimits we see that the set of subgraphs $X'$ such that $X\subseteq X'\subseteq Z$ and the inclusion $X\subseteq X'$ is in $\mathcal{E}$ has a maximal element $X^\bullet$. We have $LGX^\bullet\cong LGX$ and the properties of Diagram (\ref{equation-xlx}) are retained when we replace $X$ with $X^\bullet$; the graphs $Z_i$ and $Z$ remain the same.

  The proof will be complete once we demonstrate that $X^\bullet$ is in $\mathcal{D}$. It is enough to show that for any $\varepsilon:W_1\to W_2$ in $\mathcal{E}$ and any map $f:W_1\to X^\bullet$ the dashed map below exists and is unique.
  \begin{equation}
  \mylabel{equation-two-squares}
  \xymatrix{
    W_1 \ar[r]^f \ar[d]_\varepsilon &
      X^\bullet \ar[d]^{\varepsilon'} \ar@{}[rr]|*+{\subseteq} &
      &
      Z \\
    W_2 \ar[r] \ar@{-->}[ur] &
      P \ar@{..>}[urr]^\mu \ar[r]_{\alpha} &
      P' \ar[r]_(0.38)e \ar@{..>}[ur]_{\mu'} &
      Z\times Z_P \ar[u]_\pi
  }
  \end{equation}
  The uniqueness is clear from the uniqueness of the composition with the inclusion $X^\bullet\subseteq Z$. Let $P$ be the pushout of $\varepsilon$ and $f$. Then $\varepsilon'$ is in $\mathcal{E}$ and therefore, since $Z$ is in $\mathcal{D}$, the map $\mu$ exists. Lemma \ref{lemma-epi} implies the existence of the epimorphism $\alpha$ in $\mathcal{E}$ and of the monomorphism $e$. The map $\mu'$ exists since $\alpha$ is in $\mathcal{E}$ and $Z$ is in $\mathcal{D}$. Commutativity of the triangles involving $\mu$ or $\mu'$ follows from their uniqueness under $X^\bullet$.

  We apply $G$ to the right part of (\ref{equation-two-squares}) and factor some arrows through $LGX^\bullet$ to obtain the diagram
  $$
  \xymatrix{
    GX^\bullet \ar[d]_{G\alpha\varepsilon'} \ar@{}[r]|*+{\subseteq} &
      LGX^\bullet \ar[dr]_g \ar[r]^{e'}_{\subseteq} &
      GZ \\
    GP' \ar[rr]^{Ge} \ar[ur]_h &
    &
    G(Z\times Z_P) \ar[u]_{G\pi}
  }
  $$
  We proved that $\alpha\varepsilon'$ is in $\mathcal{E}$, hence $G\alpha\varepsilon'$ is in $\overline{\mathcal{E}}$, and since also $LGX^\bullet$ is in $\overline{\mathcal{D}}$, we know that $h$ exists. Since $GX^\bullet\subseteq LGX^\bullet$ is in $\overline{\mathcal{E}}$ and $G(Z\times Z_P)$ is in $\overline{\mathcal{D}}$, we obtain $g$.

  Since $Ge$ is one-to-one we see that $h$ has to be a monomorphism and therefore $\mu'$ in Diagram (\ref{equation-two-squares}) is also a monomorphism. Maximality of $X^\bullet$ implies that $\alpha\varepsilon'$ is an isomorphism, hence the dashed arrow in (\ref{equation-two-squares}) exists, which completes the proof.
\end{proof}

As a corollary we obtain the following.

\begin{proposition}
\mylabel{proposition-weak-vopenka}
  Assuming the negation of weak Vop\v enka's principle, there exists a nonreflexive orthogonality class in the category of abelian groups.
\end{proposition}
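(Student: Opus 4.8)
The plan is to derive Proposition~\ref{proposition-weak-vopenka} as a direct contrapositive consequence of the preceding lemma. Weak Vop\v enka's principle, in the form stated in the excerpt, asserts that every orthogonality class in $\g$ is reflective. So assume it fails: there is an orthogonality class $\mathcal{D}$ in $\g$ which is not reflective. Replacing $\mathcal{D}$ by $\mathcal{D}^{\perp\perp}$ if necessary (using $\mathcal{D}^{\perp\perp\perp}=\mathcal{D}^\perp$, so $\mathcal{D}^{\perp\perp}$ is again an orthogonality class with the same orthogonal class of morphisms, hence the same reflectivity status), we may assume $\mathcal{D}=\mathcal{S}^\perp$ and $\mathcal{S}=\mathcal{D}^\perp$ form an orthogonal pair in $\g$ with $\mathcal{D}$ not reflective.

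Next I would invoke the construction preceding the lemma: from the orthogonal pair $(\mathcal{S},\mathcal{D})$ in $\g$ we obtain, via the functor $G$ of Corollary~\ref{corollary-embedding-graphs}, an orthogonal pair $(\overline{\mathcal{S}},\overline{\mathcal{D}})$ in $\a$, where $\overline{\mathcal{S}}=G\mathcal{D}^\perp$ and $\overline{\mathcal{D}}=\overline{\mathcal{S}}^\perp$. By definition $\overline{\mathcal{D}}$ is an orthogonality class in $\a$. The previous lemma states: if $(\overline{\mathcal{S}},\overline{\mathcal{D}})$ is associated with a localization then $(\mathcal{S},\mathcal{D})$ is associated with a localization, i.e.\ $\mathcal{D}$ is reflective. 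Taking the contrapositive: since $\mathcal{D}$ is \emph{not} reflective, the orthogonal pair $(\overline{\mathcal{S}},\overline{\mathcal{D}})$ is not associated with a localization, which by Remark~\ref{remark-reflector} means exactly that $\overline{\mathcal{D}}$ is not reflective in $\a$. Thus $\overline{\mathcal{D}}$ is the desired nonreflexive orthogonality class in the category of abelian groups.

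Since all groups produced by $G$ are torsion free abelian, $\overline{\mathcal{D}}$ lives in the honest category of abelian groups, so no issue arises there; and nothing in the argument requires weak Vop\v enka's principle itself, only its negation, so the conclusion is consistent with ZFC exactly when the negation is. There is essentially no obstacle remaining: the whole content has been pushed into the preceding lemma, and what is left is the routine bookkeeping of passing from ``the induced pair has no localization'' to ``$\overline{\mathcal{D}}$ is not reflective,'' which is immediate from the definitions of orthogonal pair and reflective class together with Remark~\ref{remark-reflector}. If anything needs care it is only checking that the orthogonal pair attached to $\mathcal{D}$ may be chosen with $\mathcal{S}=\mathcal{D}^\perp$ genuinely recovering $\mathcal{D}$ as $\mathcal{S}^\perp$, i.e.\ that we work with $\mathcal{D}^{\perp\perp}$ rather than an arbitrary orthogonality class representative; but as noted this substitution changes neither the orthogonal complement nor reflectivity, so the reduction is harmless.
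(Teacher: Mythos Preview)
Your proposal is correct and matches the paper's approach exactly: the paper states the proposition immediately after the lemma with the phrase ``As a corollary we obtain the following'' and gives no further proof, so the intended argument is precisely the contrapositive reduction you spell out. Your added remark about passing to $\mathcal{D}^{\perp\perp}$ is harmless bookkeeping that the paper leaves implicit.
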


\begin{proposition}\mylabel{proposition-vopenka}
  Assuming the negation of Vop\v enka's principle there exists an orthogonality class in the category of abelian groups which is not a small-orthogonality class.
\end{proposition}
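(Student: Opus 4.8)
The plan is to run the argument of Proposition~\ref{proposition-weak-vopenka} one level higher, replacing ``reflective'' by ``small-orthogonality class'' throughout. Assume the negation of Vop\v enka's principle and fix an orthogonal pair $(\mathcal{S},\mathcal{D})$ in $\g$, with $\mathcal{D}=\mathcal{S}^\perp$, which is \emph{not} a small-orthogonality class; form the orthogonal pair $(\overline{\mathcal{S}},\overline{\mathcal{D}})$ in $\a$ exactly as before, with $\overline{\mathcal{S}}=(G\mathcal{D})^{\perp}$ and $\overline{\mathcal{D}}=\overline{\mathcal{S}}^{\perp}$. I would show that $\overline{\mathcal{D}}$ is not a small-orthogonality class either; so suppose, for contradiction, that $\overline{\mathcal{D}}=\mathcal{T}^{\perp}$ for a \emph{set} $\mathcal{T}$ of homomorphisms, and aim to conclude that $\mathcal{D}$ itself is a small-orthogonality class in $\g$, a contradiction.

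The first step is the identity $\mathcal{D}=\{\,X\in\g:GX\in\overline{\mathcal{D}}\,\}$, which holds for the pair $(\overline{\mathcal{S}},\overline{\mathcal{D}})$ independently of $\mathcal{T}$. Since $\gamma$ is an isomorphism (Corollary~\ref{corollary-embedding-graphs}) and the functor $\mathbb{Z}[-]$ both preserves and reflects bijections of sets, for any map of graphs $s\colon A\to B$ and any graph $D$ the homomorphism $(Gs)^{*}\colon\Hom(GB,GD)\to\Hom(GA,GD)$ corresponds under $\gamma$ to $\mathbb{Z}[s^{*}]$; hence $Gs\perp GD$ holds in $\a$ if and only if $s\perp D$ holds in $\g$. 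In particular $G\mathcal{S}\subseteq(G\mathcal{D})^{\perp}=\overline{\mathcal{S}}$ and $G\mathcal{D}\subseteq(G\mathcal{D})^{\perp\perp}=\overline{\mathcal{D}}$; so $X\in\mathcal{D}$ yields $GX\in G\mathcal{D}\subseteq\overline{\mathcal{D}}$, while $GX\in\overline{\mathcal{D}}=\overline{\mathcal{S}}^{\perp}$ yields $GX\perp\overline{\mathcal{S}}\supseteq G\mathcal{S}$, and then the pointwise equivalence, applied to every $s\in\mathcal{S}$, gives $X\in\mathcal{D}$.

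The second step transfers accessibility across $G$. As a small-orthogonality class in the locally presentable category $\a$, the class $\overline{\mathcal{D}}$ is reflective and closed under $\lambda$-filtered colimits for some regular cardinal $\lambda$ (take $\lambda$ above the presentability ranks of the domains and codomains of the members of $\mathcal{T}$); see \cite[Chapter~6]{adamek-rosicky}. On the other hand $G$ preserves countably directed colimits: a countable subgraph of a countably directed colimit $\colim_{j}X_{j}$ factors compatibly through some $X_{j}$ as a subgraph, because $\g$ is locally finitely presentable and its $\aleph_1$-presentable objects are precisely the countable graphs, and then Lemma~\ref{lemma-injective} together with the presentation $\bar{G}X=\colim_{C\in[X]}\bar{G}C$ shows that the canonical comparison $\colim_{j}GX_{j}\to G\,\colim_{j}X_{j}$ is a bijection. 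Enlarging $\lambda$ to be at least $\aleph_1$, it follows that $\mathcal{D}$ is closed under $\lambda$-filtered colimits in $\g$: if $X=\colim_{j}X_{j}$ is $\lambda$-filtered with each $X_{j}\in\mathcal{D}$, then $GX=\colim_{j}GX_{j}$ with each $GX_{j}\in\overline{\mathcal{D}}$, so $GX\in\overline{\mathcal{D}}$ and $X\in\mathcal{D}$ by the identity above. Since $\mathcal{D}=\mathcal{S}^{\perp}$ is moreover closed under limits and is reflective --- by the Lemma preceding Proposition~\ref{proposition-weak-vopenka}, applied to the localization that, by Remark~\ref{remark-reflector}, is associated with the reflective class $\overline{\mathcal{D}}$ --- the characterization of small-orthogonality classes in a locally presentable category \cite[Chapter~6]{adamek-rosicky} shows $\mathcal{D}$ to be a small-orthogonality class, contradicting its choice. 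This contradiction establishes the proposition.

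The step I expect to be the main obstacle is the claim that $G$ preserves countably directed colimits. The functor $G$ is nowhere near accessible in the naive sense --- it takes finite graphs to groups of cardinality $2^{\aleph_0}$ --- so no ready-made ``accessible functor'' fact applies; one must instead exploit that $\bar{G}$ is \emph{defined} as an $\aleph_1$-filtered colimit over countable subgraphs along inclusions, and check that this presentation commutes with $\aleph_1$-filtered colimits of graphs even when the transition maps are not monomorphisms. Injectivity of $\colim_{j}GX_{j}\to G\,\colim_{j}X_{j}$ is the delicate point: given a countable subgraph $C$ of the colimit and two elements of $\bar{G}C$ that coincide downstream, one must push forward to a term $X_{j'}$ along which $C$ embeds and on which the two elements already agree, using $\aleph_1$-presentability of $C$ to witness both the embedding and the coincidence at a single stage. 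A secondary, bookkeeping matter is to identify precisely which results of \cite{adamek-rosicky} deliver ``small-orthogonality class $\Rightarrow$ reflective and accessibly embedded'' and the converse ``reflective, closed under limits and under $\lambda$-filtered colimits $\Rightarrow$ small-orthogonality class''.
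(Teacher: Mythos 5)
Your argument is correct and is essentially the paper's: the paper's proof of Proposition \ref{proposition-vopenka} simply notes that $G$ preserves countably directed colimits and invokes the transfer argument of \cite[Proposition 8.8]{przezdziecki-groups}, which runs along exactly the lines you spell out (reflect membership in $\overline{\mathcal{D}}$ back to $\mathcal{D}$ using the isomorphism $\gamma$, use that a small-orthogonality class is closed under $\lambda$-filtered colimits for suitable $\lambda$, and transfer this closure along $G$ to conclude that $\mathcal{D}$ would be a small-orthogonality class in $\g$). The one ingredient that both you and the paper leave without a written proof --- that $G$ preserves countably directed colimits --- is precisely the point you flag, and your sketch of it (lift countable subgraphs to a single stage of the diagram, then use Lemma \ref{lemma-injective} and the presentation $\bar{G}X=\colim_{C\in[X]}\bar{G}C$) is the right mechanism.
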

\begin{proof}
  Since the functor $G$ preserves countably directed colimits the proof of \cite[Proposition 8.8]{przezdziecki-groups} applies.
\end{proof}

The converses of Propositions \ref{proposition-weak-vopenka} and
\ref{proposition-vopenka} follow from
\cite[Theorem 6.22 and Corollary 6.24(iii)]{adamek-rosicky}.

\section{Non-reflective subcategories of the stable homotopy category}
\mylabel{section-stable}

We prove, under negation of weak Vop\v enka's Principle, that, in the stable homotopy theory of spectra $\mathcal{S}$, there exists a semiorthogonality class which is not associated with a localization. This is closely related to the Hovey-Palmieri-Strickland problem \cite[Section 3.2]{hovey-axiomatic}.

Let $X$ be a spectrum and $f:A\to B$ be a map between spectra. The notion of orthogonality (\ref{equation-orthogonal-groups}) is not suitable here. Instead, we should look at the map of function spectra, induced by $f$:
\begin{equation}
  f^*:F(B,X)\longrightarrow F(A,X)
\end{equation}
We have two meaningful notions of orthogonality here. We say that $f$ is {\em semiorthogonal} to $X$ and write $f\curlywedge X$ if $f^*$ induces isomorphisms of the stable homotopy groups in the nonnegative degrees. We say that $f$ is {\em orthogonal} to $X$ and write $f\perp X$ if $f^*$ is a weak equivalence -- induces isomorphisms of all the homotopy groups.

These two notions of (semi)orthogonality lead to {\em (semi)orthogonal pairs}: Recall that $(\mathcal{E},\mathcal{D})$ is a (semi)orthogonal pair if $\mathcal{E}$ consists of all the morphisms (semi)orthogonal to every object in $\mathcal{D}$ and conversely. As in the preceding section if $\mathcal{C}$ is a class of objects (morphisms) then $\mathcal{C}^\curlywedge$ denotes the class of all morphisms (objects) semiorthogonal to every member of $\mathcal{C}$. A functor $L:\mathcal{S}\to\mathcal{S}$ is called a localization associated with a (semi)orthogonal pair $(\mathcal{E},\mathcal{D})$ if $\mathcal{E}$ is the class of $L$-equivalences (i.e. those morphisms $f$ for which $Lf$ is a weak equivalence) and $\mathcal{D}$ is the class of objects weakly equivalent to $LX$ for some $X$ (the local objects).

Let us note that this concept of semiorthogonality is analogous, but not equivalent, to the one found in \cite[Section 1.3]{casacuberta-hovey}.

One should note that most of the contemporary literature considers only those localizations which are associated with the orthogonal pairs. Those localizations which are associated with semiorthogonal pairs only do not preserve exact sequences nor commute with the suspension, main examples are the stable analogues of the Postnikov sections. Such localizations are allowed for example in \cite[Paragraph 2.2]{bousfield}, \cite[Diagram 6.4]{casacuberta-connective}, \cite[Remark 4.4]{dwyer} and others.

Hovey, Palmieri and Strickland asked, in a different language, if every orthogonal pair, in a stable homotopy category, is associated with a localization
\cite[Section 3.2]{hovey-axiomatic}. The question was answered positively, under Vop\v enka's Prinsiple, by Casacuberta, Guti\'errez and Rosick\'y \cite{casacuberta-hovey} -- they constructed localizations associated to orthogonal as well as semiorthogonal pairs. Their result is valid in stable model categories which admit combinatorial models. Theorem \ref{theorem-stable} gives, under negation of weak Vop\v enka's Principle, a negative answer for semiorthogonal pairs in the stable homotopy theory of spectra $\mathcal{S}$. This result may suggest that the answer to the Hovey-Palmieri-Strickland question depends on set theory. On the other hand one should note that the Hovey-Palmieri-Strickland problem has a positive answer in relatively less complicated stable homotopy categories -- in derived categories $D(R)$ of Noetherian rings $R$: it follows from a result of Neeman who proved \cite{neeman} that if $R$ is Noetherian then the colocalizing subcategories in $D(R)$ form a set.

\begin{theorem}\label{theorem-stable}
  Assuming negation of weak Vop\v enka's Principle there exists a semiorthogonal pair in $\mathcal{S}$ which is not associated with a localization.
\end{theorem}

\begin{proof}
  Proposition \ref{proposition-weak-vopenka} states that the negation of weak Vop\v enka's Principle implies existence of an orthogonal pair
  $(\mathcal{E},\mathcal{D})$ in the category of abelian groups $\a$ which is not associated with any localization.
  We have a full embedding $H : \a\to\mathcal{S}$ which sends a group $A$ to the Eilenberg-Mac Lane spectrum $HA$ where $(HA)_n = K(A, n)$. Some stable homotopy groups of the spectrum of functions between the Eilenberg-Mac
  Lane spectra are described below:
$$
\pi_n(F(HA,HB)) = [\Sigma^nHA,HB] =
  \left\{\begin{array}{ll}
    0 & n > 0 \\
    \Hom(A,B) & n = 0 \\
    \Ext(A,B) & n = -1 \\
  \end{array}\right.
$$
This implies that the embedding H takes a pair $f$ and $A$ such that $f\perp A$ in $\a$ to a pair $Hf$ and $HA$ such that $Hf\curlywedge HA$ in $\mathcal{S}$.
In other words $H$ preserves the semiorthogonality (not the orthogonality). This implies
that $(H\mathcal{E},H\mathcal{D})$ extends to a semiorthogonal pair $(H\mathcal{D}^{\curlywedge},H\mathcal{D}^{\curlywedge\curlywedge})$ in $\mathcal{S}$.

Suppose that the pair $(H\mathcal{D}^{\curlywedge},H\mathcal{D}^{\curlywedge\curlywedge})$ corresponds to a localization $L$. Then for any group $A$ we
have a localization map
$$ HA \to LHA$$
In the remainder of the proof we obtain a contradiction by demonstrating that the corresponding homomorphisms of abelian groups
\begin{equation}\label{equation-localization-groups}
 A = \pi_0HA \to \pi_0LHA
\end{equation}
define a localization associated with the orthogonal pair $(\mathcal{E},\mathcal{D})$.
Let $S$ be the sphere spectrum. Since $\Sigma S \to 0$ belongs to $H\mathcal{D}^\curlywedge$ we see that the homotopy
groups of $LHA$ are trivial above $0$. Let $Z \to LHA$ be the $(-1)$-connected cover. Then
$Z$ has its homotopy concentrated in dimension $0$ and therefore $Z = HB$ for some group $B$. Let $F$ be the homotopy fiber of the cover. We obtain a fibration
$$F \to HB \to LHA$$
and for any group $C$ we have the following exact sequence
$$[HC,F] \to [HC,HB] \twoheadrightarrow [HC,LHA]$$
The right hand map is onto since $HC$ is its own $(-1)$-connected cover. Also $[HC, F] = 0$
since the homotopy of F is concentrated in the negative degrees. Therefore the right hand map is an isomorphism hence:
\begin{equation}\label{equation-stable-hom}
\Hom(C,B) \cong [HC,LHA]
\end{equation}
For any homomorphism of abelian groups $f : C_1 \to C_2$ in $\mathcal{E}$ we have
$$
  \xymatrix{
    \Hom(C_2,B) \ar[r]^{\cong}\ar[d]_{f^*} &
      [HC_2,LHA] \ar[d]^{\cong}_{Hf^*} \\
    \Hom(C_1,B) \ar[r]^{\cong} &
      [HC_1,LHA]
  }
$$
The horizontal arrows are the isomorphisms (\ref{equation-stable-hom}), $LHA$ is local and $Hf$ is in $H\mathcal{E}$ hence $Hf^*$ is an isomorphism and therefore $f^*$ is an isomorphism. Since $f$ is an arbitrary member of $\mathcal{E}$ we see that $B$ is in $\mathcal{D}$ and therefore $HB$ is in $H\mathcal{D}$.

Since $HB\to LHA$ is a $(-1)$-connected cover we obtain the following factorization of the localization map.
$$
HA \to HB \to LHA
$$
Since $HB$ is local it implies that $LHA$ is a retract of $HB$ which, together with the fact that $HB$ is the $(-1)$-connected cover of $LHA$, yields $HB \cong LHA$. Thus $L$ preserves the image of $H$. It implies that $L$ lifts to a localization in the category of abelian groups (\ref{equation-localization-groups}), associated with $(\mathcal{E},\mathcal{D})$, which does not exist by assumption. Therefore $L$ can not exist.
\end{proof}

\section{The isomorphism problem}
\mylabel{section-isomorphism}

It is natural to ask if an isomorphism $GX\cong GY$ implies $X\cong Y$. The affirmative answers to the analogous questions in the cases of full embeddings and almost full embeddings in the sense of \cite{trnkova-30} or \cite{przezdziecki-groups} are clear. We don't know the answer for the functor $G$ considered here. All we know is that $X$ has to be isomorphic to a retract of $Y$ and vice versa: if $h:GX\to GY$ is an isomorphism then $h=\sum_ik_i\sigma_i$ and $h^{-1}=\sum_jn_j\tau_j$, hence $\id_X=\sum_j\sum_in_jk_i\tau_j\sigma_i$, and therefore for certain $i$ and $j$ we have $\id_X=\sigma_i\tau_j$. In particular any counterexample to the isomorphism problem has to be infinite. A possible negative answer to this problem does not interfere with the results of Section \ref{section-orthogonal} since if $\mathcal{D}$ is an orthogonality class and $X$ belongs to $\mathcal{D}$ then all the retracts of $X$ belong to $\mathcal{D}$.

\end{document}